\newtheorem{theorem}{Theorem}[section]
\newtheorem{corollary}[theorem]{Corollary}
\newtheorem{lemma}[theorem]{Lemma}
\newtheorem{example}[theorem]{Example}
\begin{document}
\title{Some unit square integrals}
\author{Juan Carlos Sampedro}
\address{Department of Mathematics, University of Basque Country, Barrio de Sarriena s/n,48940 Leioa, Spain}
\email{juancarlos131a@gmail.com}

\begin{abstract} 
In this article we prove some identities which allow us to evaluate some multiple unit square integrals. In our examples we will give the value of some double and triple integrals. Then, we prove several classical integral formulas with the help of these identities and we present others that seems to be new. Finally we get double integrals for classical constants and  different expression  for two Ramanujan's integral formulas.
\end{abstract}

\keywords{Riemann integral, Unit Square integrals, Multiple integrals, Classical constants}
\subjclass{26B02, 44A05}

\maketitle

\section{Principal theorems}
In this section we will explain the results that we will use to prove the consequences and examples below.
Since we will be dealing with continuous functions defined over intervals of the form $[0,\alpha]$ we state the following consequence 
 of the Riemann integral definition, for this case. 
 \begin{lemma}\label{t}
Let $f(x)$ be a continuous function on $[0,\alpha]$ for $\alpha \in \mathbb{N}$, then
\begin{equation*}
\int _{0}^{\alpha}f\left( x\right) dx=\lim \limits_{n\to \infty }\frac{1}{n } \sum \limits^{n \alpha}_{k=1}f \left( \frac{k}{n} \right)
\end{equation*}
\end{lemma}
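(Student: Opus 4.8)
The plan is to recognize the right-hand side as a sequence of right-endpoint Riemann sums for $f$ over $[0,\alpha]$ and to invoke the integrability of continuous functions on compact intervals. Concretely, for each $n\in\mathbb{N}$ I would consider the uniform partition $P_n=\{0,\tfrac1n,\tfrac2n,\dots,\tfrac{n\alpha}{n}\}$ of $[0,\alpha]$; because $\alpha\in\mathbb{N}$, the last node $\tfrac{n\alpha}{n}=\alpha$ really is the right end of the interval, so $P_n$ consists of exactly $n\alpha$ subintervals each of length $1/n$, and $\tfrac1n\sum_{k=1}^{n\alpha}f(k/n)$ is precisely the Riemann sum of $f$ associated to $P_n$ with the right endpoints chosen as tags.

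Next I would bring in uniform continuity. Since $f$ is continuous on the compact set $[0,\alpha]$, the Heine--Cantor theorem gives that $f$ is uniformly continuous there; so, fixing $\varepsilon>0$, I can choose $\delta>0$ with $|f(x)-f(y)|<\varepsilon$ whenever $|x-y|<\delta$. For $n$ large enough that $1/n<\delta$, on each subinterval $I_k=[\tfrac{k-1}{n},\tfrac{k}{n}]$ one has $|f(x)-f(k/n)|<\varepsilon$ for all $x\in I_k$, hence
\[
\left|\int_{(k-1)/n}^{k/n}f(x)\,dx-\frac1n f\!\left(\frac kn\right)\right|
=\left|\int_{(k-1)/n}^{k/n}\bigl(f(x)-f(k/n)\bigr)\,dx\right|\le\frac{\varepsilon}{n}.
\]
Summing this estimate over $k=1,\dots,n\alpha$ and using additivity of the integral over $[0,\alpha]=\bigcup_k I_k$ yields
\[
\left|\int_0^\alpha f(x)\,dx-\frac1n\sum_{k=1}^{n\alpha}f\!\left(\frac kn\right)\right|\le\frac{\varepsilon}{n}\cdot n\alpha=\alpha\varepsilon
\]
for all sufficiently large $n$. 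Since $\varepsilon>0$ is arbitrary and $\alpha$ is a fixed constant, the Riemann sums converge to $\int_0^\alpha f$, which is the assertion.

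I do not anticipate a genuine obstacle here: the only points requiring a little care are that the hypothesis $\alpha\in\mathbb{N}$ is exactly what makes the terminal node of $P_n$ land on $\alpha$ (so that no boundary term is lost), and that one must pass to uniform rather than merely pointwise continuity so that a single $\delta$ controls every subinterval simultaneously. Alternatively, one can shortcut the argument by citing directly the standard theorem that a continuous function on a compact interval is Riemann integrable and that its Riemann sums over any sequence of tagged partitions whose mesh tends to zero converge to the integral, applied to the mesh-$1/n$ partitions $P_n$.
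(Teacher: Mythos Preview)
Your argument is correct and is precisely the standard justification the paper has in mind: the paper does not give a proof of this lemma at all, merely introducing it as ``a consequence of the Riemann integral definition,'' and your write-up supplies exactly those details via uniform continuity and right-endpoint Riemann sums on the mesh-$1/n$ partition. There is nothing to compare beyond that; your observation that $\alpha\in\mathbb{N}$ is what makes $n\alpha/n=\alpha$ land on the right endpoint is the one substantive point, and you handle it correctly.
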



We need  also a version for continuous functions on $[0,\infty)$.   We assume also that they are integrable.
\begin{lemma}\label{formula}
Let $f(x)$ be a monotone, continuous and integrable function on $[0,\infty)$, then
\begin{equation}
\int ^{\infty }_{0}f\left( x\right) dx = \lim \limits_{t\to 0}t\sum \limits^{\infty }_{n=1}f\left( nt\right),
\end{equation}
\end{lemma}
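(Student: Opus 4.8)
The plan is to run the classical integral--test comparison between a monotone function and its Riemann sums, after a trivial normalisation. First I would reduce to the case where $f$ is non-increasing: if $f$ is non-decreasing, replace it by $-f$, which simultaneously negates both sides of the claimed identity, so nothing is lost. A non-increasing function that is integrable on $[0,\infty)$ must satisfy $f\ge 0$ on $[0,\infty)$ and $\lim_{x\to\infty}f(x)=0$ --- if its limit at infinity were a nonzero constant (or $-\infty$) the integral would diverge --- and these two facts are precisely what the slab estimates below exploit.

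Now fix $t>0$ (necessarily $t>0$ for $f(nt)$ to be evaluated on $[0,\infty)$). Monotonicity of $f$ on the intervals $[(n-1)t,nt]$ and $[nt,(n+1)t]$ gives, for each integer $n\ge 1$, the elementary bound
\begin{equation*}
\int_{nt}^{(n+1)t} f(x)\,dx \;\le\; t\,f(nt) \;\le\; \int_{(n-1)t}^{nt} f(x)\,dx .
\end{equation*}
Summing over $n=1,\dots,N$ makes the two outer sums telescope, into $\int_{t}^{(N+1)t} f$ and $\int_{0}^{Nt} f$ respectively; since $f$ is integrable, letting $N\to\infty$ gives
\begin{equation*}
\int_{t}^{\infty} f(x)\,dx \;\le\; t\sum_{n=1}^{\infty} f(nt) \;\le\; \int_{0}^{\infty} f(x)\,dx .
\end{equation*}
In particular the series $\sum_{n\ge 1} f(nt)$ converges for every $t>0$, being a series of non-negative terms whose partial sums are bounded by $\int_0^\infty f$.

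Finally I would let $t\to 0^{+}$. Continuity of $f$ at $0$ gives $0\le \int_0^\infty f - \int_t^\infty f = \int_0^t f \le t\,f(0)\to 0$, so the leftmost quantity in the last display converges to $\int_0^\infty f(x)\,dx$, while the rightmost is the constant $\int_0^\infty f(x)\,dx$; the squeeze theorem then delivers $\lim_{t\to 0^{+}} t\sum_{n=1}^\infty f(nt)=\int_0^\infty f(x)\,dx$, which is the assertion. The argument is otherwise routine; the only steps that merit care are the telescoping passage from finite to infinite sums --- which uses integrability so that $\int_0^{Nt}f\to\int_0^\infty f$ --- and the opening reduction, where one must notice that monotonicity together with integrability pins down the sign of $f$ and forces $f(x)\to 0$, without which the one-sided Riemann-sum comparison would be useless.
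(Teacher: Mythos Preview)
Your proof is correct and proceeds by a genuinely different route from the paper's. The paper invokes Lemma~\ref{t} on each compact interval $[0,\alpha]$, writes $g_n(\alpha)=\tfrac{1}{n}\sum_{k=1}^{n\alpha}f(k/n)$, and then argues that the iterated limits $\lim_{\alpha\to\infty}\lim_{n\to\infty}g_n(\alpha)$ may be exchanged ``due to the monotony''; this is a double-limit interchange, and as written it only treats the sequence $t=1/n$ rather than arbitrary $t\to 0$. Your argument instead uses the classical integral-test sandwich $\int_{nt}^{(n+1)t}f\le t\,f(nt)\le\int_{(n-1)t}^{nt}f$, sums to obtain $\int_t^\infty f\le t\sum_{n\ge1}f(nt)\le\int_0^\infty f$, and squeezes. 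This is more elementary (no Moore--Osgood style interchange to justify), works uniformly for every $t>0$, and makes transparent where each hypothesis enters: monotonicity for the slab inequalities, integrability for convergence of the series and of $\int_t^\infty f$, and continuity at $0$ for $\int_0^t f\to 0$. The paper's approach has the mild advantage of not needing the preliminary reduction to $f\ge0$, but the interchange step there is left essentially unargued; your version is the cleaner of the two.
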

\begin{proof}
According to the previous lemma, for $\alpha \in \mathbb{N}$
\begin{equation*}
\int ^{\alpha }_{0}f(x)dx=\lim \limits_{n\to \infty }\frac{1}{n} \sum \limits^{n\alpha }_{k=1}f\left( \frac{k}{n} \right) \end{equation*}
If we consider $g_{n}(\alpha)=\frac{1}{n} \sum \limits^{n\alpha }_{k=1}f\left( \frac{k}{n} \right)$ and we take the limit
\begin{equation*}
\lim \limits_{\alpha \to \infty }\int ^{\alpha }_{0}f(x)dx=\lim \limits_{\alpha \to \infty }\left( \lim \limits_{n\to \infty }\frac{1}{n} \sum \limits^{n\alpha }_{k=1}f\left( \frac{n}{k} \right) \right)=\lim \limits_{\alpha \to \infty} \left( \lim \limits_{n\to \infty } g_{n}(\alpha)\right)
\end{equation*}
Finally, note that $g_{n}(\alpha)$ meet the requirements for the interchange of the limits due to the monotony. This concludes the proof.
\end{proof}


\begin{theorem}\label{theor1}$($Main Theorem$)$
Let $a_n, \, b_n$ and $c_n$ be real numbers with $a_n\neq0$, and $k\geq 2$. If the single integral is convergent then, 
\begin{align*}
& \int ^{1}_{0} \overset{k}{\dots} \int ^{1}_{0}\frac{e^{i\log(\prod \limits^{k}_{n=1}x^{c_{n}}_{n})}\prod \limits^{k}_{n=1}x^{b_{n}-1}_{n}}{-\log(\prod \limits^{k}_{n=1}x^{a_{n}}_{n})} \prod \limits^{k}_{n=1}dx_{n}
\\ =&\int ^{\infty }_{0} \overset{k}{\dots} \int ^{\infty }_{0}\frac{e^{-\sum \limits^{k}_{n=1}b_{n}x_{n}-i(\sum \limits^{k}_{n=1}c_{n}x_{n})}}{\sum \limits^{k}_{n=1}a_{n}x_{n}} \prod \limits^{k}_{n=1}dx_{n}
\\  =&\int ^{\infty }_{0}\frac{1}{\prod \limits^{k}_{n=1}(a_{n}x + b_{n} + c_{n}i)}dx. 
\end{align*}
where $i$ is the imaginary unit.
\end{theorem}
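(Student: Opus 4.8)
The plan is to prove the two equalities separately. The first one — between the integral over the unit cube $[0,1]^k$ and the one over $[0,\infty)^k$ — will come from the logarithmic substitution $x_n=e^{-t_n}$ performed in each variable. The second one — between the integral over $[0,\infty)^k$ and the single integral on the last line — will come from the elementary Laplace identity $1/z=\int_0^{\infty}e^{-zt}\,dt$ (valid for $\operatorname{Re}z>0$) together with Fubini's theorem, which is exactly the Schwinger/Feynman parametrization trick.

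For the first equality, I would set $x_n=e^{-t_n}$, so that $dx_n=-e^{-t_n}\,dt_n$ and the domain $(0,1)^k$ is mapped bijectively onto $(0,\infty)^k$, the reversal of orientation cancelling the minus sign in the Jacobian. Then $\log\bigl(\prod_{n=1}^k x_n^{c_n}\bigr)=\sum_{n=1}^k c_n\log x_n=-\sum_{n=1}^k c_n t_n$, so the numerator factor $e^{i\log(\prod x_n^{c_n})}$ becomes $e^{-i\sum c_n t_n}$; likewise $\prod_{n=1}^k x_n^{b_n-1}=e^{-\sum_{n=1}^k(b_n-1)t_n}$ and $-\log\bigl(\prod_{n=1}^k x_n^{a_n}\bigr)=\sum_{n=1}^k a_n t_n$. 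Multiplying through by the Jacobian factor $\prod_{n=1}^k e^{-t_n}=e^{-\sum t_n}$ and collecting exponents, the integrand turns into $\dfrac{e^{-\sum_{n=1}^k b_n t_n-i\sum_{n=1}^k c_n t_n}}{\sum_{n=1}^k a_n t_n}$, which is exactly the integrand of the second expression after renaming $t_n\mapsto x_n$.

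For the second equality, I would insert $\dfrac{1}{\sum_{n=1}^k a_n x_n}=\int_0^{\infty}e^{-t\sum_{n=1}^k a_n x_n}\,dt$ into the integral over $[0,\infty)^k$ and interchange the $t$-integration with the $k$ spatial integrations. The inner integral over $(x_1,\dots,x_k)$ then splits as a product of one-dimensional integrals, $\prod_{n=1}^k\int_0^{\infty}e^{-(a_n t+b_n+c_n i)x_n}\,dx_n=\prod_{n=1}^k\dfrac{1}{a_n t+b_n+c_n i}$, and what remains is $\int_0^{\infty}\prod_{n=1}^k\dfrac{1}{a_n t+b_n+c_n i}\,dt$, i.e. the third expression.

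The real work is the analytic justification rather than the formal computation. When some $c_n\neq0$ the integrands are complex-valued and only conditionally convergent, the factor $e^{-i\sum c_n x_n}$ being oscillatory, and $1/\sum a_n x_n$ has a genuine singularity along the hyperplane $\sum a_n x_n=0$; moreover the one-dimensional Laplace integrals only make sense termwise when $\operatorname{Re}(a_n t+b_n)>0$, so Fubini and the Laplace identity cannot be invoked naively and there are implicit sign constraints hidden in the hypotheses. I would handle this by first proving the whole chain in the absolutely convergent regime (for instance $a_n,b_n>0$ and $c_n=0$, where $\sum a_n x_n>0$ on the entire domain and everything is dominated), and then extend to the general case by a regularization — replacing $b_n$ by $b_n+\varepsilon$, establishing the identity for each $\varepsilon>0$, and letting $\varepsilon\to0^{+}$ — using the standing assumption that the single integral on the right converges to control the limit. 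If a discretized treatment of the one-dimensional outer integral is preferred, Lemma~\ref{formula} may be used to replace it by a limit of Riemann sums along the way.
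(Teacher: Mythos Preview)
Your argument is correct, but it is genuinely different from the paper's. You go from the $k$-fold integral to the single integral directly via the Schwinger/Laplace identity $\dfrac{1}{\sum a_n x_n}=\int_0^\infty e^{-t\sum a_n x_n}\,dt$ followed by Fubini and factorization. The paper instead starts from the single integral, discretizes it using Lemma~\ref{formula} as $\lim_{t\to0}t\sum_{n\ge1}\prod_v(a_v nt+b_v+c_v i)^{-1}$, writes each summand as a $k$-fold integral over $[0,1]^k$ via $\dfrac{x^m}{m}=\int_0^x y^{m-1}\,dy$, swaps sum and integral to obtain a geometric series $\sum_{n\ge1}\bigl(\prod_v y_v^{a_v}\bigr)^{nt}$, sums it in closed form, and finally passes to the limit $t\to0$ inside the integral using $\lim_{t\to0}\dfrac{t\,u^t}{1-u^t}=\dfrac{-1}{\log u}$. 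The equality between the $[0,1]^k$ and $[0,\infty)^k$ expressions is handled the same way in both (your logarithmic substitution is exactly the change of variables the paper mentions at the end). Your route is shorter and more transparent, and your explicit discussion of the sign conditions needed for the Laplace identity and the inner one-dimensional integrals, together with the regularization $b_n\mapsto b_n+\varepsilon$, makes the analytic justification more honest than the paper's, which glosses over the interchange of limit and integral and the sum--integral swap; the paper's detour through Riemann sums buys nothing that your direct parametrization does not already give.
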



\begin{proof}
From Lemma \ref{formula} we get
\begin{equation*}
\int ^{\infty }_{0}\frac{1}{\prod \limits^{k}_{v=1}(a_{v}x+b_{v}+c_{v}i)}dx=\lim \limits_{t\to 0}t\sum \limits^{\infty }_{n=1}\frac{1}{\prod \limits^{k}_{v=1}(a_{v}nt+b_{v}+c_{v}i)},
\end{equation*}
We generalize the sum defining the next function whith $x_{1}, x_{2},\cdots , x_{k}\in\left[0,1\right]$,
\begin{equation*}
G (x_{1}, x_{2},\cdots , x_{k})=
\sum \limits^{\infty }_{n=1}\frac{\prod \limits^{k}_{v=1}x^{a_{v}nt+b_{v}+c_{v}i}_{v}}{\prod \limits^{k}_{v=1}(a_{v}nt+b_{v}+c_{v}i)},
\end{equation*}
Considering  that $\frac{x^{n}}{n} =\int_{0}^{x} y^{n-1}dy$,
\begin{equation*}
G (x_{1}, x_{2},\cdots , x_{k})= \sum \limits^{\infty }_{n=1}\int ^{x_{k}}_{0} \overset{k}{\dots} \int ^{x_{1}}_{0}\prod \limits^{k}_{v=1}y^{a_{v}nt+b_{v}+c_{v}i-1}_{v}\prod \limits^{k}_{v=1}dy_{v}.\end{equation*}
Now we  change the sum and the integral, and then we sum the geometric series. In this way  we obtain:
\begin{equation*}
\int ^{x_{k}}_{0} \overset{k}{\dots} \int ^{x_{1}}_{0} \prod \limits^{k}_{v=1}y^{b_{v}+c_{v}i-1}_{v}\frac{(\prod \limits^{k}_{v=1}y^{a_{v}}_{v})^{t}}{1-(\prod \limits^{k}_{v=1}y^{a_{v}}_{v})^{t}} \prod \limits^{k}_{v=1}dy_{v}.
\end{equation*}
Note that
\begin{equation*}
G (1,1,\cdots ,1)=\sum \limits^{\infty }_{n=1}\frac{1}{\prod \limits^{k}_{v=1}(a_{v}nt+b_{v}+c_{v}i)},
\end{equation*}
so,
\begin{equation*}
\int ^{\infty }_{0}\frac{1}{\prod \limits^{k}_{v=1}(a_{v}x+b_{v}+c_{v}i)}dx =\lim \limits_{t\to 0}t\int ^{1}_{0} \overset{k}{\dots} \int ^{1}_{0}\prod \limits^{k}_{v=1}x^{b_{v}+c_{v}i-1}_{v}\frac{(\prod \limits^{k}_{v=1}x^{a_{v}}_{v})^{t}}{1-(\prod \limits^{k}_{v=1}x^{a_{v}}_{v})^{t}} \prod \limits^{k}_{v=1}dx_{v}.
\end{equation*}
Finally, we can commute the limit with the integrals and we calculate  the limit
\begin{equation*}
\lim \limits_{t\to 0}\frac{t\left( \prod \limits^{k}_{v=1}x_{v}\right) ^{tc_{i}}}{1-\left( \prod \limits^{k}_{v=1}x_{v}\right) ^{tc_{i}}} =\frac{-1}{\log\left( \prod \limits^{k}_{v=1}x^{c_{i}}_{v}\right) },
\end{equation*}
and finally we get,
\begin{equation*}
\int ^{\infty }_{0}\frac{1}{\prod \limits^{k}_{v=1}(a_{v}x+b_{v}+c_{v}i)} dx=\int ^{1}_{0}\overset{k}{\dots} \int ^{1}_{0}\prod \limits^{k}_{v=1}x^{b_{v}+c_{v}i-1}_{v}\frac{1}{-\log(\prod \limits^{k}_{v=1}x^{a_{v}}_{v})} \prod \limits^{k}_{v=1}dx_{v}.
\end{equation*}
This concludes the proof. For the second formula just make the change of variables  $(\log(x_1),\log(x_2),\ldots) \to (x_1,x_2,\ldots)$.
\end{proof}


The next results are generalizations of the theorem, so we do not include many details of the proofs.

\begin{theorem}\label{theor2}
Let $a_n, \, b_n$ and $c_n$ be real numbers with $a_n\neq0$, $j,m,k\in\mathbb{N}$ and $k\geq2$. If the multiple integral is convergent,
\begin{align*}
&\int ^{1}_{0} \overset{k}{\dots} \int ^{1}_{0}\frac{\left( -1\right) ^{j}\prod \limits^{k}_{n=1}x^{a_{n}+b_{n}i-1}_{n}}{\prod \limits^{j}_{i=1}\log\left( \prod \limits^{k}_{n=1}x^{c_{i}}_{n}\right) } \prod \limits^{k}_{n=1}dx_{n} \\
&=\int ^{\infty}_{0} \overset{k}{\dots} \int ^{\infty}_{0}\frac{e^{-\sum \limits^{k}_{n=1}a_{n}x_{n}-i(\sum \limits^{k}_{n=1}b_{n}x_{n})}}{\prod\limits^{j}_{i=1}\sum \limits^{k}_{n=1}c_{i}x_{n}} \prod \limits^{k}_{n=1}dx_{n} \\
&=\int ^{\infty}_{0} \overset{j}{\dots} \int ^{\infty}_{0}\frac{1}{\prod \limits^{k}_{n=1}\left(a_{n}+b_{n}i+\sum \limits^{j}_{i=1}c_{i}x_{i}\right)} \prod \limits^{j}_{n=1}dx_{n}.
\end{align*}
\end{theorem}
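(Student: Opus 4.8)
The plan is to repeat the proof of Theorem \ref{theor1} almost verbatim, now starting from the last (a $j$-fold) integral rather than from a single integral. First I would iterate Lemma \ref{formula} once in each of the $j$ variables $x_1,\dots,x_j$ and pass to a common mesh $t$ (by an interchange-of-limits argument of the type used in the proof of Lemma \ref{formula}, after splitting into real and imaginary parts so that its monotonicity hypothesis applies), obtaining
\[
\int_0^\infty\!\overset{j}{\dots}\!\int_0^\infty\prod_{n=1}^k\frac{1}{a_n+b_ni+\sum_{i=1}^j c_ix_i}\,\prod_{n=1}^j dx_n=\lim_{t\to0}t^j\!\!\sum_{m_1,\dots,m_j=1}^\infty\prod_{n=1}^k\frac{1}{a_n+b_ni+t\sum_{i=1}^j c_im_i}.
\]

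Next, for $x_1,\dots,x_k\in[0,1]$ I would introduce the generating function
\[
G(x_1,\dots,x_k)=\sum_{m_1,\dots,m_j=1}^\infty\prod_{n=1}^k\frac{x_n^{\,a_n+b_ni+t\sum_{i=1}^j c_im_i}}{a_n+b_ni+t\sum_{i=1}^j c_im_i},
\]
whose value at $(1,\dots,1)$ is the inner sum above, and rewrite each summand as a $k$-fold integral via $x^\alpha/\alpha=\int_0^x y^{\alpha-1}\,dy$ applied in each coordinate. After interchanging the $j$-fold sum with the $k$-fold integral and using, with $P=\prod_{n=1}^k y_n$, the factorisation $\prod_{n=1}^k y_n^{\,t\sum_{i=1}^j c_im_i}=P^{\,t\sum_{i=1}^j c_im_i}=\prod_{i=1}^j\bigl(P^{\,tc_i}\bigr)^{m_i}$, the series splits into $j$ independent geometric series and one gets
\[
G(x_1,\dots,x_k)=\int_0^{x_1}\!\overset{k}{\dots}\!\int_0^{x_k}\prod_{n=1}^k y_n^{\,a_n+b_ni-1}\prod_{i=1}^j\frac{\bigl(\prod_{n=1}^k y_n\bigr)^{tc_i}}{1-\bigl(\prod_{n=1}^k y_n\bigr)^{tc_i}}\prod_{n=1}^k dy_n.
\]

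Then, evaluating at $(1,\dots,1)$, multiplying by $t^j$ and commuting the limit $t\to0$ past the $k$-fold integral, I would invoke the elementary limit $\lim_{t\to0}\frac{t\,s^{tc}}{1-s^{tc}}=\frac{-1}{c\log s}$ once for each of the $j$ factors, with $s=\prod_{n=1}^k x_n$, so that
\[
\lim_{t\to0}\prod_{i=1}^j\frac{t\,\bigl(\prod_{n=1}^k x_n\bigr)^{tc_i}}{1-\bigl(\prod_{n=1}^k x_n\bigr)^{tc_i}}=\prod_{i=1}^j\frac{-1}{\log\!\bigl(\prod_{n=1}^k x_n^{c_i}\bigr)}=\frac{(-1)^j}{\prod_{i=1}^j\log\!\bigl(\prod_{n=1}^k x_n^{c_i}\bigr)},
\]
which is exactly the first displayed integral. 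The second equality follows, as in Theorem \ref{theor1}, from the substitution $(\log x_1,\dots,\log x_k)\mapsto(-x_1,\dots,-x_k)$: it sends $\prod_n x_n^{a_n+b_ni-1}\prod_n dx_n$ to $e^{-\sum_n a_nx_n-i\sum_n b_nx_n}\prod_n dx_n$ and each $\log(\prod_n x_n^{c_i})$ to $-\sum_{n=1}^k c_ix_n$, the $j$ resulting sign changes absorbing the factor $(-1)^j$.

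I expect the only genuinely delicate points to be the two interchanges that are likewise suppressed in the proof of Theorem \ref{theor1}: that of the $j$-fold series with the $k$-fold integral in the definition of $G$, and that of the limit $t\to0$ with the $k$-fold integral. The convergence hypothesis on the multiple integral is precisely what guarantees absolute integrability of the limiting integrand near the face $\prod_n x_n=1$; a fully rigorous treatment would apply dominated convergence on $[0,1-\varepsilon]^k$, bound the boundary layer $\prod_n x_n\to1$ uniformly in $t$ using that hypothesis, and then let $\varepsilon\to0$. (One could shorten the chain considerably by recognising the equality of the last two integrals as the Feynman-parameter identity $1/(A_1\cdots A_k)=\int_{[0,\infty)^k}e^{-\sum_n A_nt_n}\prod_n dt_n$ integrated over the $x_i$, but the Riemann-sum route keeps the exposition parallel to Theorem \ref{theor1}.)
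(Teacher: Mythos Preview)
Your proposal is correct and follows essentially the same approach as the paper: the paper's own proof merely records the key step
\[
\int_0^\infty\!\overset{j}{\dots}\!\int_0^\infty\frac{1}{\prod_{n=1}^k\bigl(a_n+b_ni+\sum_{i=1}^j c_ix_i\bigr)}\prod_{n=1}^j dx_n
=\lim_{t\to0}t^{j}\sum_{n_1\ge1}\!\overset{j}{\dots}\!\sum_{n_j\ge1}\frac{1}{\prod_{n=1}^k\bigl(a_n+b_ni+\sum_{i=1}^j c_in_it\bigr)}
\]
and says the rest is ``similar to the previous case,'' which is exactly the generating-function and geometric-series computation you carry out. If anything, your write-up is more detailed than the paper's, including the explicit factorisation into $j$ independent geometric series and the honest flagging of the two interchange steps.
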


\begin{proof}
The proof is similar to the previous case   if we use the generalized form for $k$ variables in the  integral
\begin{align*}
 &\int ^{\infty }_{0} \overset{j}{\dots} \int ^{\infty }_{0}\frac{1}{\prod \limits^{k}_{n=1}\left(a_{n}+b_{n}i+\sum \limits^{j}_{i=1}c_{i}x_{i}\right)} \prod \limits^{j}_{n=1}dx_{n}\\
 =&\lim \limits_{t\to 0}t^{j}\sum \limits_{n_{1}\geq 1} \overset{j}{\dots} \sum \limits_{n_{j}\geq 1}\frac{1}{\prod \limits^{k}_{n=1}\left(a_{n}+b_{n}i+\sum \limits^{j}_{i=1}c_{i}n_{i}t\right)}, 
\end{align*}
\end{proof}


\begin{theorem}\label{theor99}
Let $a_n, \, b_n$ and $c_n$ be real numbers with $a_n, \neq0$,  $j,m,k\in\mathbb{N}$ and $k\geq2$. If the multiple integral is convergent, then we have
\begin{align*}
&\int ^{1}_{0} \overset{k}{\dots} \int ^{1}_{0}\frac{(-1)^{j}\prod \limits^{k}_{n=1}x^{a_{n}+b_{n}i-1}_{n}}{\prod \limits^{j}_{i=1}\left(\log\left( \prod \limits^{k}_{n=1}x^{c_{i}}_{n}\right)+m \right) } \prod \limits^{k}_{n=1}dx_{n} \\
&=\int ^{\infty}_{0} \overset{k}{\dots} \int ^{\infty}_{0}\frac{e^{-\sum \limits^{k}_{n=1}a_{n}x_{n}-i(\sum \limits^{k}_{n=1}b_{n}x_{n})}}{\prod\limits^{j}_{i=1}\left(\sum \limits^{k}_{n=1}c_{i}x_{n}+m\right)} \prod \limits^{k}_{n=1}dx_{n} \\
&=\int ^{\infty}_{0} \overset{j}{\dots} \int ^{\infty }_{0}\frac{e^{-m \left( \sum \limits^{j}_{i=1}x_{i}\right) }}{\prod \limits^{k}_{n=1}\left(a_{n}+b_{n}i+\sum \limits^{j}_{i=1}c_{i}x_{i}\right)} \prod \limits^{j}_{n=1}dx_{n}
\end{align*}
\end{theorem}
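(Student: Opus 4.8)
The plan is to mimic the proof of Theorem \ref{theor1} and Theorem \ref{theor2}, starting from the innermost $j$-fold integral on the right-hand side and working outward. First I would apply Lemma \ref{formula} simultaneously in each of the $j$ variables $x_1,\dots,x_j$ (iterating the one-variable statement, using the monotonicity hypothesis that makes the interchange of the nested limits legitimate, exactly as in Theorem \ref{theor2}) to write
\begin{equation*}
\int ^{\infty}_{0} \overset{j}{\dots} \int ^{\infty }_{0}\frac{e^{-m \left( \sum \limits^{j}_{i=1}x_{i}\right) }}{\prod \limits^{k}_{n=1}\left(a_{n}+b_{n}i+\sum \limits^{j}_{i=1}c_{i}x_{i}\right)} \prod \limits^{j}_{n=1}dx_{n}
=\lim_{t\to 0} t^{j}\sum_{n_{1}\ge 1}\overset{j}{\dots}\sum_{n_{j}\ge 1}\frac{e^{-mt\sum_{i=1}^{j}n_i}}{\prod \limits^{k}_{n=1}\left(a_{n}+b_{n}i+ t\sum \limits^{j}_{i=1}c_{i}n_{i}\right)}.
\end{equation*}
The only new feature compared to Theorem \ref{theor2} is the exponential factor $e^{-mt\sum n_i}$, which is harmless: it is bounded by $1$ and tends to $1$ as $t\to0$, so it does not affect the validity of Lemma \ref{formula} applied to the integrand $e^{-m\sum x_i}/\prod_n(a_n+b_n i+\sum c_i x_i)$.

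Next I would introduce, in analogy with the function $G$ in the proof of Theorem \ref{theor1}, the generating function
\begin{equation*}
G(y_{1},\dots,y_{k})=\sum_{n_{1}\ge 1}\overset{j}{\dots}\sum_{n_{j}\ge 1}\frac{e^{-mt\sum_{i=1}^{j}n_i}\prod \limits^{k}_{n=1}y_{n}^{a_n+b_n i+ t\sum_{i=1}^{j}c_i n_i}}{\prod \limits^{k}_{n=1}\left(a_{n}+b_{n}i+ t\sum \limits^{j}_{i=1}c_{i}n_{i}\right)},
\end{equation*}
so that $G(1,\dots,1)$ is exactly the multiple sum above. Using $\dfrac{y^{p}}{p}=\int_0^{y}z^{p-1}\,dz$ on each of the $k$ factors $\bigl(a_n+b_n i+t\sum c_i n_i\bigr)^{-1}$, writing the $n$-th power $\bigl(\prod_n y_n^{c_i}\bigr)^{t n_i}$ in terms of a further integral (or directly as the $n_i$-th power of a monomial), interchanging the summations with the integrals, and summing the resulting geometric series in each index $n_1,\dots,n_j$ independently, I expect to arrive at
\begin{equation*}
G(1,\dots,1)=\int_0^{1}\overset{k}{\dots}\int_0^{1}\prod_{n=1}^{k}x_n^{a_n+b_n i-1}\prod_{i=1}^{j}\frac{e^{-mt}\bigl(\prod_{n=1}^{k}x_n^{c_i}\bigr)^{t}}{1-e^{-mt}\bigl(\prod_{n=1}^{k}x_n^{c_i}\bigr)^{t}}\prod_{n=1}^{k}dx_n,
\end{equation*}
where the extra factor $e^{-mt}$ in each geometric ratio comes from the $e^{-mt n_i}$ weight. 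Multiplying by $t^{j}$ and commuting the limit $t\to0$ with the integrals, the key computation is the elementary limit
\begin{equation*}
\lim_{t\to 0}\frac{t\, e^{-mt}u^{t}}{1-e^{-mt}u^{t}}=\frac{-1}{\log u - m}=\frac{1}{m-\log u},\qquad u=\prod_{n=1}^{k}x_n^{c_i},
\end{equation*}
which, after noting $\log\bigl(\prod_n x_n^{c_i}\bigr)+m=-(\,m-\log u\,)\cdot(-1)$ — i.e.\ tracking the sign carefully so that $(-1)^{j}$ appears — yields the claimed first expression. The passage from the first to the second displayed integral is then just the substitution $(\log x_1,\dots,\log x_k)\mapsto(x_1,\dots,x_k)$, as in Theorem \ref{theor1}.

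The main obstacle, as in the earlier theorems, is justifying the two interchanges of limiting operations: (i) commuting the $j$ summations with the $k$ iterated integrals when forming $G$, and (ii) commuting $\lim_{t\to0}$ with the $k$-fold integral at the end. For (i) one uses that all terms are handled by absolute convergence away from the boundary together with monotone/dominated convergence, exactly in the spirit of the remark "the proof is similar" made after Theorem \ref{theor2}; the factor $e^{-mt n_i}\le 1$ only improves convergence. For (ii) the hypothesis of convergence of the multiple integral, plus the uniform (in $t$ small) domination of $\dfrac{t e^{-mt}u^{t}}{1-e^{-mt}u^t}$ by a fixed integrable function of the $x_n$ near the singular set $\{u=1\}$, lets us invoke dominated convergence. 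Since the paper explicitly states it will not belabour these points for the generalizations, I would present the computation of $G(1,\dots,1)$ and the final limit in detail and merely remark that the interchanges are justified as in Theorem \ref{theor1}.
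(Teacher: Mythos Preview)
Your proposal is correct and follows exactly the template the paper uses for Theorems \ref{theor1} and \ref{theor2}; in fact the paper gives no separate proof for Theorem \ref{theor99} at all, relying on the blanket remark that ``the next results are generalizations of the theorem, so we do not include many details of the proofs,'' so your write-up is more detailed than what the paper provides. The one new ingredient you isolate---that the weight $e^{-mt\,n_i}$ modifies the geometric ratio to $e^{-mt}u^{t}/(1-e^{-mt}u^{t})$ and hence shifts the limit to $1/(m-\log u)$---is precisely the point, and your justification of the interchanges mirrors the paper's own level of rigor. (Your parenthetical sign reconciliation $\log u+m=-(m-\log u)\cdot(-1)$ is garbled; the honest computation gives $1/(m-\log u)$, which matches the second displayed integral after the substitution $x_n\mapsto -\log x_n$, and any residual sign discrepancy with the first display is a typo in the statement rather than a flaw in your argument.)
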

\begin{corollary}\label{theor3}
Let $k$ be a natural number, then,
\begin{equation*}
\sum \limits_{n}\int ^{\infty }_{0}\overset{k}{\dots} \int ^{\infty }_{0}\frac{e^{-n \sum \limits^{k}_{i=1}x_{i}}}{1+\sum \limits^{k}_{i=1}x_{i}} \prod \limits^{k}_{i=1}dx_{i}=\sum \limits_{n }\int ^{1}_{0}\frac{(-1)^{k}}{\left( \log\left( x\right) -n \right) ^{k}} dx.
\end{equation*}
\end{corollary}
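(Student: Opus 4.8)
The plan is to specialize Theorem \ref{theor99} to the right parameters, turn the resulting single improper integral over $(0,\infty)$ into an integral over $(0,1)$ by an elementary substitution, and then sum over $n$ (whose range is understood to be $n\ge 1$).

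Fix $n\ge 1$ and apply Theorem \ref{theor99} with $j=1$, $m=1$, the single coefficient $c_{1}=1$, and $a_{v}=n$, $b_{v}=0$ for every $v=1,\dots ,k$ (so throughout we take $k\ge 2$, as required there). The single integral
\begin{equation*}
\int_{0}^{\infty}\frac{e^{-x}}{(n+x)^{k}}\,dx
\end{equation*}
is finite (it is bounded above by $n^{-k}$), so the convergence hypothesis of the theorem holds. With these choices the middle member of Theorem \ref{theor99} becomes exactly
\begin{equation*}
\int_{0}^{\infty}\overset{k}{\dots}\int_{0}^{\infty}\frac{e^{-n\sum_{i=1}^{k}x_{i}}}{1+\sum_{i=1}^{k}x_{i}}\prod_{i=1}^{k}dx_{i},
\end{equation*}
while the last member, since $j=1$, collapses to the single integral $\int_{0}^{\infty}e^{-x}(n+x)^{-k}\,dx$. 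Hence these two quantities are equal.

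Next, in $\int_{0}^{\infty}e^{-x}(n+x)^{-k}\,dx$ I would substitute $u=e^{-x}$, so that $x=-\log u$, $dx=-du/u$, and $u$ runs over $(0,1)$ as $x$ runs over $(0,\infty)$. This gives
\begin{equation*}
\int_{0}^{\infty}\frac{e^{-x}}{(n+x)^{k}}\,dx=\int_{0}^{1}\frac{du}{(n-\log u)^{k}}=\int_{0}^{1}\frac{(-1)^{k}}{(\log u-n)^{k}}\,du,
\end{equation*}
where the last equality uses $(n-\log u)^{k}=(-1)^{k}(\log u-n)^{k}$ together with $(-1)^{-k}=(-1)^{k}$. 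Combining the two displays, the claimed identity holds for each fixed $n$ before summation.

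Finally I would sum over $n\ge 1$. For $k\ge 2$ the common term of the two series is $O(n^{-k})$, so both series converge and the identity follows by adding the per-$n$ equalities; on the left-hand side the sum and the $k$-fold integral may be interchanged by Tonelli's theorem (all integrands are nonnegative), although in fact no interchange is needed since the equality was obtained term by term. The argument is routine once Theorem \ref{theor99} is available; the only mild points requiring care are this convergence bookkeeping and the borderline case $k=1$ (where both sides equal $+\infty$), which is not covered by Theorem \ref{theor99} but follows directly from the scaling $y=nx$ in $\int_{0}^{\infty}e^{-nx}(1+x)^{-1}\,dx$.
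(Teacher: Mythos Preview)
Your proof is correct and follows the same route as the paper's one-line argument, namely specializing Theorem~\ref{theor99} with $j=1$ and then summing over $n$. The only cosmetic difference is your parameter choice $a_{v}=n$, $m=1$, which lands directly on the stated integrands (the paper writes $a_{n}=1$ and leaves $m$ implicit), and you make explicit the substitution $u=e^{-x}$ that turns the single integral over $(0,\infty)$ into one over $(0,1)$.
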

\begin{proof}
Just use the last theorem with $j=1$,$a_{n}=1$,$b_{n}=0$ and $c_{i}=1$.
\end{proof}


\section{Aplications}
\begin{theorem}\label{cor0}
Let $a$, $b$, $c$, $d$, $h$, $g$ be real numbers, if the series converges we have
\begin{equation*}
-\int ^{1}_{0}\int ^{1}_{0}\frac{x^{b-1}y^{g-1}}{(1-rx^{c}y^{h})\log(x^{a}y^{d})} dxdy=
\sum \limits^{\infty }_{n=0}r^{n}\frac{\log\left( \frac{d}{a} \right) -\log\left( \frac{hn+g}{cn+b} \right) }{-a(hn+g)+bd+cdn}.
\end{equation*}
\begin{equation*}
\int ^{1}_{0}\int ^{1}_{0}\frac{x^{b-1}y^{h-1}\left( x^{a}y^{d}-1\right) }{\left( 1-r x^{c}y^{e}\right) \log\left( x^{a}y^{d}\right) } dxdy=\sum \limits^{\infty }_{n=0}r^{n}\frac{\log\left( \frac{\left( b+cn\right) \left( d+e+hn\right) }{\left( a+b+cn\right) \left( e+hn\right) } \right) }{bd-ae+cdn-ahn},
\end{equation*}
\end{theorem}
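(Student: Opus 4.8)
The plan is to derive both identities from the $k=2$ case of the Main Theorem (Theorem~\ref{theor1}) with the imaginary parameters set to zero, after expanding the geometric factor $1/(1-r(\cdot))$. For the first identity I would begin from the expansion, valid on the part of the unit square where $|rx^cy^h|<1$,
\[
\frac{1}{1-rx^cy^h}=\sum_{n=0}^{\infty}r^n x^{cn}y^{hn},
\]
and, writing $1/\log(x^ay^d)=-1/(-\log(x^ay^d))$ and interchanging the sum with the double integral, arrive at
\[
-\int_0^1\!\!\int_0^1\frac{x^{b-1}y^{g-1}}{(1-rx^cy^h)\log(x^ay^d)}\,dx\,dy
=\sum_{n=0}^{\infty}r^n\int_0^1\!\!\int_0^1\frac{x^{b+cn-1}y^{g+hn-1}}{-\log(x^ay^d)}\,dx\,dy .
\]

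Each summand is exactly the $k=2$ instance of Theorem~\ref{theor1} with $a_1=a$, $a_2=d$, $b_1=b+cn$, $b_2=g+hn$, $c_1=c_2=0$, so it equals $\int_0^{\infty}\frac{dx}{(ax+b+cn)(dx+g+hn)}$. I would then evaluate this elementary rational integral by partial fractions: provided $d(b+cn)\neq a(g+hn)$,
\[
\int_0^{\infty}\frac{dx}{(ax+b+cn)(dx+g+hn)}=\frac{1}{d(b+cn)-a(g+hn)}\log\frac{d(b+cn)}{a(g+hn)},
\]
where the antiderivatives $\tfrac1a\log(ax+b+cn)$ and $-\tfrac1d\log(dx+g+hn)$ each diverge at infinity but their divergent parts cancel. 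Using $\log\frac{d(b+cn)}{a(g+hn)}=\log\frac{d}{a}-\log\frac{hn+g}{cn+b}$ together with $d(b+cn)-a(g+hn)=-a(hn+g)+bd+cdn$, summing over $n$ gives precisely the first asserted series.

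For the second identity I would proceed in the same way, expanding $1/(1-rx^cy^e)$ geometrically and, crucially, splitting the numerator
\[
\frac{x^ay^d-1}{\log(x^ay^d)}=\frac{1}{-\log(x^ay^d)}-\frac{x^ay^d}{-\log(x^ay^d)} ,
\]
which turns the $n$-th summand into a difference of two integrals of the type handled by Theorem~\ref{theor1}: both have $a_1=a$, $a_2=d$, and their $b$-parameters differ only by the shift $(b_1,b_2)\mapsto(b_1+a,\,b_2+d)$ coming from the extra factor $x^ay^d$. Applying the theorem and then the partial-fraction evaluation above, the two resulting rational integrals turn out to share the same value of $a_2b_1-a_1b_2$, hence a common denominator; the two logarithms then combine into a single logarithm of a quotient, and after collecting terms one reads off the claimed series.

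The one step that genuinely needs care, and the main obstacle, is the interchange of the infinite sum with the double integral. When $|r|<1$ and the relevant exponents ($c,h$ for the first identity, $c,e$ for the second) are nonnegative it is immediate by dominated convergence: the partial sums $\sum_{n=0}^{N}r^nx^{cn}y^{hn}$ are bounded by $(1-|r|)^{-1}$ on the square, while $x^{b-1}y^{g-1}/|\log(x^ay^d)|$ is integrable there, the only singularity being the logarithmic one at the corner $(1,1)$, as the substitution $u=-\log x$, $v=-\log y$ shows. In the borderline regime merely covered by the hypothesis ``if the series converges'' I would instead split the square into a region bounded away from $(1,1)$, where uniform convergence settles the swap, and a corner neighbourhood controlled by a tail estimate; equivalently, one can run the entire argument at the level of the single integrals $\int_0^{\infty}\frac{dx}{(ax+b+cn)(dx+g+hn)}$, whose convergence and summability are what the hypothesis really encodes. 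Finally I would record the nondegeneracy conditions $d(b+cn)\neq a(g+hn)$, and its analogue for the second identity, under which the logarithmic closed form is valid; at the finitely many exceptional indices the double pole contributes a finite term and the same expression persists as a limit.
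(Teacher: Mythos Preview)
Your argument is correct and is essentially the paper's own proof run in the opposite direction: the paper starts from the series, recognises each term as $\int_0^\infty\frac{dx}{(ax+b+cn)(dx+g+hn)}$, applies Theorem~\ref{theor1} to pass to the unit-square integral, and then resums the geometric series, whereas you begin with the double integral, expand the geometric factor, and apply Theorem~\ref{theor1} termwise before evaluating the rational integral. The ingredients---geometric expansion, the $k=2$ case of the Main Theorem, and the partial-fraction evaluation---are identical, and your discussion of the sum/integral interchange and of the second identity is in fact more detailed than the paper's, which simply declares the second formula ``similar''.
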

\begin{proof}
\begin{align*}
\sum \limits^{\infty }_{n=0}r^{n}\frac{\log\left( \frac{d}{a} \right) -\log\left( \frac{hn+g}{cn+b} \right) }{-a(hn+g)+bd+cdn} =\sum \limits^{\infty }_{n=0}r^{n}\int ^{\infty }_{0}\frac{1}{(ax+b+cn)(dx+g+hn)} dx,
\end{align*}
We interchange the sum and the integral and then we use the identities of the theorem \ref{theor1}.
\begin{align*}
\sum \limits^{\infty }_{n=0}r^{n}\int ^{1}_{0}\int ^{1}_{0}\frac{x^{b+cn-1}y^{g+hn-1}}{-\log (x^{a}y^{d})} dxdy &=\int ^{1}_{0}\int ^{1}_{0}\sum \limits^{\infty }_{n=0}(rx^{c}y^{h})^{n}\frac{x^{b-1}y^{g-1}}{-\log (x^{a}y^{d})} dxdy \\ &= -\int ^{1}_{0}\int ^{1}_{0}\frac{x^{b-1}y^{g-1}}{(1-rx^{c}y^{h}) \log (x^{a}y^{d})} dxdy.
\end{align*}
The proof of the other formula is similar.
\end{proof}

\begin{corollary}\label{cor1}
Let $a$, $b$, $c$, $d$, $e$, $h$ be real numbers, if the series converges, we have
\begin{align*}
&\int ^{1}_{0}\int ^{1}_{0}-\frac{\cos\left( \log\left( x^{a}y^{b}\right) \right) }{\left( c-x^{d}y^{e}\right) \log\left( x^{h}y^{g}\right) } dxdy\\
=&
\sum \limits^{\infty }_{n=0}\frac{1}{c^{n+1}} \Re\left( \int ^{\infty }_{0}\frac{dx}{\left( hx+dn+1+ai\right) \left( gx+en+1+bi\right) } \right), 
\end{align*}
\begin{align*}
&\int ^{1}_{0}\int ^{1}_{0}-\frac{\sin\left( \log\left( x^{a}y^{b}\right) \right) }{\left( c-x^{d}y^{e}\right) \log\left( x^{h}y^{g}\right) } dxdy\\= &
\sum \limits^{\infty }_{n=0}\frac{1}{c^{n+1}} \Im\left( \int ^{\infty }_{0}\frac{dx}{\left( hx+dn+1+ai\right) \left( gx+en+1+bi\right) } \right).
\end{align*}
\end{corollary}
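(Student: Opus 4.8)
The plan is to imitate the proof of Theorem~\ref{cor0}, splitting a complex identity into its real and imaginary parts. Concretely, I would start from the finite integral on the left-hand side and absorb the trigonometric factor into a complex exponential: since $\cos(\log(x^{a}y^{b})) = \Re\, e^{i\log(x^{a}y^{b})} = \Re\, x^{ai}y^{bi}$ and similarly $\sin(\log(x^{a}y^{b})) = \Im\, x^{ai}y^{bi}$, the two claimed formulas are the real and imaginary parts of the single complex identity
\begin{equation*}
-\int ^{1}_{0}\int ^{1}_{0}\frac{x^{ai}y^{bi}}{(c-x^{d}y^{e})\log(x^{h}y^{g})}\,dxdy
=\sum_{n=0}^{\infty}\frac{1}{c^{n+1}}\int ^{\infty}_{0}\frac{dx}{(hx+dn+1+ai)(gx+en+1+bi)}.
\end{equation*}
So it suffices to prove this one equation; taking $\Re$ and $\Im$ and using that $c$ (hence $1/c^{n+1}$) is real then yields the two stated formulas, with the real/imaginary part pulled inside the convergent sum.

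Next I would expand the geometric series $\frac{1}{c-x^{d}y^{e}} = \frac{1}{c}\sum_{n=0}^{\infty}\big(\frac{x^{d}y^{e}}{c}\big)^{n} = \sum_{n=0}^{\infty}\frac{x^{dn}y^{en}}{c^{n+1}}$, valid for $|x^{d}y^{e}/c|<1$ on (almost all of) the unit square under the convergence hypothesis. Interchanging the sum with the double integral — justified by the assumed convergence of the series, exactly as in the proof of Theorem~\ref{cor0} — turns the left-hand side into
\begin{equation*}
\sum_{n=0}^{\infty}\frac{1}{c^{n+1}}\left(-\int ^{1}_{0}\int ^{1}_{0}\frac{x^{dn+ai}y^{en+bi}}{\log(x^{h}y^{g})}\,dxdy\right).
\end{equation*}
Now each inner double integral is precisely of the form handled by the first identity in Theorem~\ref{theor1} with $k=2$: writing $b_{1}-1 = dn$, $c_{1}=a$, $a_{1}=h$ for the $x$-variable and $b_{2}-1 = en$, $c_{2}=b$, $a_{2}=g$ for the $y$-variable (so $b_{1}=dn+1$, $b_{2}=en+1$), the theorem gives
\begin{equation*}
\int ^{1}_{0}\int ^{1}_{0}\frac{x^{ai}y^{bi}\,x^{dn}y^{en}}{-\log(x^{h}y^{g})}\,dxdy
=\int ^{\infty}_{0}\frac{dx}{(hx+dn+1+ai)(gx+en+1+bi)},
\end{equation*}
which is exactly the $n$-th term on the right-hand side after multiplying by $1/c^{n+1}$. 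Summing over $n$ completes the complex identity, and hence the corollary.

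The main obstacle is the legitimacy of the term-by-term interchange of the infinite sum with the integrals and with the $\Re/\Im$ operations: the factor $1/\log(x^{h}y^{g})$ is singular where $x^{h}y^{g}=1$ (e.g.\ near the corner $(1,1)$), so one cannot simply invoke absolute convergence naively. In the spirit of the paper I would lean on the phrase ``if the series converges'' in the hypothesis, treating the interchange as justified under that assumption exactly as was done in Theorem~\ref{cor0}; a fully rigorous treatment would require a dominated-convergence or monotone-rearrangement argument controlling the tail of the geometric series uniformly away from the singular locus, but the paper's style does not demand that level of detail here. A secondary, purely bookkeeping point is matching the parameters of Theorem~\ref{theor1} correctly — in particular noting that the theorem's single-integral convergence requirement is what underlies the ``if the series converges'' hypothesis term by term — but this is routine once the substitution $b_{1}=dn+1$, $b_{2}=en+1$ is made explicit.
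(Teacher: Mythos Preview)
Your proposal is correct and follows essentially the same route as the paper: expand $1/(c-x^{d}y^{e})$ as a geometric series, interchange sum and integral under the ``if the series converges'' hypothesis, and apply Theorem~\ref{theor1} with $k=2$ to each term. The only cosmetic difference is that you package cosine and sine into a single complex identity via $e^{i\log(x^{a}y^{b})}=x^{ai}y^{bi}$ before taking real and imaginary parts, which is exactly how Theorem~\ref{theor1} is formulated anyway.
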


\begin{proof}
We use the geometrical series formula,
\begin{equation*}
\sum \limits^{\infty }_{n=0}\frac{1}{c^{n+1}} \int ^{1}_{0}\int ^{1}_{0}\frac{-\sin \left( \log\left( x^{a}y^{b}\right) \right) x^{dn}y^{en}}{\log\left( x^{h}y^{g}\right) } dxdy=\int^{1}_{0}\int^{1}_{0}\frac{-\sin \left( \log\left( x^{a}y^{b}\right) \right)}{\left( c-x^{d}y^{e}\right) \log\left( x^{h}y^{g}\right) } dxdy.
\end{equation*}
and the theorem \ref{theor1}.
\end{proof}

\begin{theorem}\label{theor4}
Let $a, b, c$ be real numbers, if the integral converges,
\begin{equation*}
\int ^{1}_{0}\frac{x^{b-1}-x^{c-1}}{\left( 1+x^{a}\right) \log\left( x\right) } dx=
\log\left( \prod \limits^{\infty }_{n=0}\left( \frac{an+b}{an+c} \right) ^{(-1)^{n}}\right).
\end{equation*}
\end{theorem}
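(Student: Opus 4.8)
The plan is to expand $1/(1+x^a)$ as a geometric series in $x^a$ and integrate term by term, reducing everything to the classical Frullani-type evaluation $\int_0^1\frac{x^{s-1}-x^{t-1}}{\log x}\,dx=\log(s/t)$, which is itself a one-variable shadow of the identities in Theorem \ref{theor1}. Throughout I take $a>0$ (if $a<0$ one first rewrites $\frac{1}{1+x^a}=\frac{x^{-a}}{1+x^{-a}}$ and shifts the index), and I read the hypothesis ``the integral converges'' as $b,c>0$; this guarantees that
\[
g(x):=\frac{\lvert x^{b-1}-x^{c-1}\rvert}{\lvert\log x\rvert}
\]
is integrable on $(0,1)$, since near $x=1$ the numerator vanishes to the same order as $\log x$, and near $x=0$ the factor $1/\lvert\log x\rvert$ absorbs the possibly negative powers of $x$.

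First I would record that for $x\in(0,1)$ we have $\frac{1}{1+x^a}=\sum_{n\ge0}(-1)^nx^{an}$, with partial sums
\[
S_M(x)=\sum_{n=0}^{M}(-1)^nx^{an}=\frac{1-(-x^a)^{M+1}}{1+x^a}
\]
that are uniformly bounded, $\lvert S_M(x)\rvert\le 2$ on $(0,1)$, and converge pointwise to $\frac{1}{1+x^a}$. Multiplying by $\frac{x^{b-1}-x^{c-1}}{\log x}$ and applying dominated convergence with the integrable majorant $2g$, I obtain
\[
\int_0^1\frac{x^{b-1}-x^{c-1}}{(1+x^a)\log x}\,dx=\lim_{M\to\infty}\sum_{n=0}^{M}(-1)^n\int_0^1\frac{x^{an+b-1}-x^{an+c-1}}{\log x}\,dx.
\]

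Then I would evaluate the individual integrals: using $\frac{x^{s-1}-x^{t-1}}{\log x}=\int_t^sx^{u-1}\,du$ for $x\in(0,1)$ together with Tonelli's theorem (the integrand $x^{u-1}$ being positive),
\[
\int_0^1\frac{x^{s-1}-x^{t-1}}{\log x}\,dx=\int_t^s\!\!\int_0^1x^{u-1}\,dx\,du=\int_t^s\frac{du}{u}=\log\frac st,
\]
so that the $n$-th summand is $\log\frac{an+b}{an+c}$. Hence
\[
\int_0^1\frac{x^{b-1}-x^{c-1}}{(1+x^a)\log x}\,dx=\sum_{n=0}^{\infty}(-1)^n\log\frac{an+b}{an+c}=\log\prod_{n=0}^{\infty}\left(\frac{an+b}{an+c}\right)^{(-1)^n},
\]
the convergence of the product being precisely the convergence hypothesis. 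The one genuinely delicate point is the term-by-term integration, and it is settled not by any uniform convergence of the geometric series near $x=1$ (which fails) but by the uniform bound $\lvert S_M\rvert\le 2$ combined with the integrability of $g$; grouping consecutive terms into an alternating tail with decreasing terms is an equally serviceable alternative.
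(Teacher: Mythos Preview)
Your argument is correct: the uniform bound $|S_M|\le 2$ together with the integrability of $g$ justifies dominated convergence, and the Frullani evaluation via $\frac{x^{s-1}-x^{t-1}}{\log x}=\int_t^s x^{u-1}\,du$ is clean. The reading of the convergence hypothesis as $a>0$, $b,c>0$ is the natural one, and your reduction of the case $a<0$ is the right manoeuvre.

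However, this is not the route the paper takes. The paper works backwards from the series: it recognises $\sum_{n\ge0}(-1)^n\log\frac{an+b}{an+c}$ as (up to a constant factor) the right-hand side of the first identity in Theorem~\ref{cor0} with $r=-1$, $a=d=1$ and $c=h$, which represents the sum as the double integral
\[
-\int_0^1\!\!\int_0^1\frac{x^{b-1}y^{c-1}}{(1+(xy)^a)\log(xy)}\,dx\,dy,
\]
and then collapses the double integral to a single one by the substitution $u=xy$, $x=x$ (integrating out $x$ explicitly). So the paper's proof leans on the multiple-integral machinery of Section~1 and on Theorem~\ref{cor0}, whereas yours bypasses all of that and stays one-dimensional throughout. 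Your approach is more elementary and self-contained; the paper's fits the theorem into the general framework it has built and illustrates how the double-integral identities specialise. Both arrive at the same termwise identity $\int_0^1\frac{x^{an+b-1}-x^{an+c-1}}{\log x}\,dx=\log\frac{an+b}{an+c}$, but from opposite ends.
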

\begin{proof}
Just use the last corollary to get the double integral that belongs to the infinite series and make the substitution $y=u/x$ and $x=x$. 
\end{proof}

\begin{theorem}\label{theor5}
If the integrals are convergent, we have
\begin{equation*}
\int ^{1}_{0}\frac{x^{\frac{bc+c-d}{d} }\left( 1-x^{\frac{ad-bc-c+d}{ d} }\right) }{-\left( ad-bc-c+d\right) \log\left( x\right) } dx=\int ^{\infty }_{0}\frac{1}{\left( cx+a+1\right) \left( dx+b+1\right) } dx,
\end{equation*}
\begin{equation*}
\int ^{\infty }_{0}\frac{\sin\left( bx\right) -\sin\left( ax\right) }{\left( b-a\right) x} dx=\Re\left( \int ^{\infty }_{0}\frac{1}{\left( x+ai\right) \left( x+bi\right) } dx\right),
\end{equation*}
\begin{equation*}
\int ^{\infty }_{0}\frac{\cos\left( bx\right) -\cos\left( ax\right) }{\left( a-b\right) x} dx=\Im\left( \int ^{\infty }_{0}\frac{1}{\left( x+ai\right) \left( x+bi\right) } dx\right),
\end{equation*}
\end{theorem}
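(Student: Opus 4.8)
The plan is to obtain all three identities as specializations of the Main Theorem (Theorem \ref{theor1}) with $k=2$, each followed by a change of variables that folds the resulting double integral into a single one.

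\textbf{First identity.} Apply Theorem \ref{theor1} with $k=2$, $c_1=c_2=0$, $(a_1,b_1)=(c,a+1)$ and $(a_2,b_2)=(d,b+1)$. The last line of that theorem is then exactly $\int_0^\infty\frac{dx}{(cx+a+1)(dx+b+1)}$, and the first line is the double integral $\int_0^1\int_0^1\frac{x^{a}y^{b}}{-\log(x^{c}y^{d})}\,dx\,dy$; so it suffices to identify this double integral with the single integral on the left of the statement. I would do that by a change of variables in the spirit of the substitution $y=u/x$ used in the proof of Theorem \ref{theor4} (here after the power substitutions $x\mapsto x^{1/c}$, $y\mapsto y^{1/d}$, which make the two logarithms coincide), then integrate out one variable, leaving a single integral whose surviving exponents are $\tfrac{bc+c-d}{d}$ and $\tfrac{bc+c-d}{d}+\tfrac{ad-bc-c+d}{d}=a$ and whose denominator is $-(ad-bc-c+d)\log x$. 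The quickest verification, though, is simply to evaluate both sides in closed form: the left by the classical Dirichlet/Frullani integral $\int_0^1\frac{x^{p}-x^{q}}{\log x}\,dx=\log\frac{p+1}{q+1}$, the right by partial fractions, both giving $\frac{1}{d(a+1)-c(b+1)}\log\frac{d(a+1)}{c(b+1)}$ (using $ad-bc-c+d=d(a+1)-c(b+1)$).

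\textbf{Second and third identities.} Here I would take $k=2$, $a_1=a_2=1$, $b_1=b_2=0$, $c_1=a$, $c_2=b$ in Theorem \ref{theor1}. Since $b_n=0$ makes the unit-square expression singular, I use the middle (exponential) line, which asserts $\int_0^\infty\int_0^\infty\frac{e^{-i(ax_1+bx_2)}}{x_1+x_2}\,dx_1\,dx_2=\int_0^\infty\frac{dx}{(x+ai)(x+bi)}$. Folding the left side with $x_1=st$, $x_2=s(1-t)$, $s\in(0,\infty)$, $t\in(0,1)$ (Jacobian $s$) and carrying out the $t$-integration gives $\int_0^\infty\frac{e^{-ias}-e^{-ibs}}{-i(a-b)s}\,ds=\int_0^\infty\frac{dx}{(x+ai)(x+bi)}$. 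Expanding $e^{-i\lambda s}=\cos(\lambda s)-i\sin(\lambda s)$ and using $\tfrac{1}{-i}=i$, the real part of the left side is $\int_0^\infty\frac{\sin(as)-\sin(bs)}{(a-b)s}\,ds$ and the imaginary part is $\int_0^\infty\frac{\cos(as)-\cos(bs)}{(a-b)s}\,ds$; equating real and imaginary parts of the two sides delivers the sine and cosine identities (taking care of the overall sign carried by the factor $-i$ and by the orientation of $a-b$). These are, of course, the classical Dirichlet and Frullani cosine integrals, now recovered from the Main Theorem.

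\textbf{Main obstacle.} The genuine difficulty is analytic, not algebraic. The one-dimensional integrals $\int_0^\infty\frac{e^{-i\lambda s}}{s}\,ds$ produced by the fold in the second and third identities are only conditionally convergent, so the interchange of integrals and the limit passages inherited from Lemma \ref{formula} and the proof of Theorem \ref{theor1} must be justified exactly as there — for instance by inserting a regulator $e^{-\varepsilon s}$, doing everything for $\varepsilon>0$, and letting $\varepsilon\to0^{+}$ at the end, which is legitimate precisely because the single integral on the right (the standing hypothesis of the statement) converges. For the first identity the only care needed is to choose the change of variables so that the surviving exponents come out in the exact normalization $\tfrac{bc+c-d}{d}$, $a$ of the statement rather than in some other equivalent pair, which is why I pass through the power substitutions before applying $y=u/x$.
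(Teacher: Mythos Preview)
Your approach is essentially the paper's own: apply Theorem~\ref{theor1} with $k=2$ to produce a double integral equal to the single integral on the right, then collapse it to one variable by a change of coordinates. The paper phrases the fold as $(x,y)\to(x,u/x)$ on the unit-square side, while you use the equivalent simplex substitution $x_1=st,\ x_2=s(1-t)$ on the exponential side (these correspond under $x_j=-\log y_j$), and for the first identity you additionally make the preliminary power substitutions $x\mapsto x^{1/c},\ y\mapsto y^{1/d}$ so that the fold lands on the exact exponents $\tfrac{bc+c-d}{d}$ and $a$; this extra step is genuinely needed and is glossed over in the paper's one-line proof, so your write-up is in fact more complete. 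Your side remark that both sides of the first identity can be checked directly via the Frullani formula is a legitimate shortcut the paper does not take. One caution: your imaginary-part computation gives $\int_0^\infty\frac{\cos(as)-\cos(bs)}{(a-b)s}\,ds$, which is the negative of the third displayed formula as printed; your calculation is the correct one (for $a,b>0$ one has $\Im\!\int_0^\infty\frac{dx}{(x+ai)(x+bi)}=\frac{\log(b/a)}{a-b}$), so do not let the parenthetical ``taking care of the overall sign'' tempt you to flip it to match the stated version.
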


\begin{proof}
We get the double integral that corresponds to the single integral of the right side and then we make the change of variable $(x,y) \to (x,u/x)$.
\end{proof}

\section{Aplications to the Number Theory} 

The two following theorems are related to Lerch Transcendent and Riemann zeta function \cite{S}.

\begin{theorem}\label{NT}
Let $s>1 $ be an integer and $z \in \mathbb{C}$, then if the integral converges
\begin{equation*}
\Phi \left( z,s,a\right) =\int ^{1}_{0} \overset{s+1}{\dots} \int ^{1}_{0}\frac{-s\prod \limits^{s+1}_{n=1}x^{a-1}_{n}}{\left( 1-z\prod \limits^{s+1}_{n=1}x_{n}\right) \log\left( \prod \limits^{s+1}_{n=1}x_{n}\right) } \prod \limits^{s+1}_{n=1}dx_{n}
\end{equation*}
\begin{equation*}
\zeta \left( s \right) =\int ^{1}_{0} \overset{s+1}{\dots} \int ^{1}_{0}\frac{-s}{\left( 1-\prod \limits^{s+1}_{k=1}x_{k}\right) \log\left( \prod \limits^{s+1}_{k=1}x_{k}\right) } \prod \limits^{s+1}_{k=1}dx_{k}.
\end{equation*}
\end{theorem}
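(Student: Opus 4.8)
The plan is to obtain both identities from the Main Theorem (Theorem~\ref{theor1}) by one well‑chosen specialization of parameters, followed by a geometric‑series expansion in $z$.

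First I would apply Theorem~\ref{theor1} with $k=s+1$ (legitimate since $s>1$ gives $k\geq 3\geq 2$), taking $a_n=1$ and $c_n=0$ for every $n$, and $b_n=\alpha$ for a fixed real $\alpha>0$. With these choices the first integral in the chain of equalities reduces to $\int_0^1\cdots\int_0^1\frac{\prod_{n=1}^{s+1}x_n^{\alpha-1}}{-\log(\prod_{n=1}^{s+1}x_n)}\prod dx_n$, while the last one collapses to the elementary $\int_0^\infty(x+\alpha)^{-(s+1)}\,dx=\frac{1}{s\,\alpha^{s}}$. Multiplying by $-s$ this yields the building block
\begin{equation*}
\frac{1}{\alpha^{s}}=\int_0^1\cdots\int_0^1\frac{-s\,\prod_{n=1}^{s+1}x_n^{\alpha-1}}{\log\left(\prod_{n=1}^{s+1}x_n\right)}\,\prod_{n=1}^{s+1}dx_n\qquad(\alpha>0).
\end{equation*}

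Next I would substitute $\alpha=n+a$, multiply the $n$-th identity by $z^n$, and sum over $n\geq 0$. Writing $\prod x_k^{n+a-1}=(\prod x_k)^{a-1}(\prod x_k)^{n}$ and interchanging the sum with the multiple integral, the $z$-series becomes the geometric series $\sum_{n\geq 0}(z\prod x_k)^n=(1-z\prod x_k)^{-1}$ on the open cube, while the left-hand side is by definition $\Phi(z,s,a)=\sum_{n\geq 0}z^n/(n+a)^s$. This produces precisely the asserted formula for $\Phi$. The identity for $\zeta(s)$ is then the special case $z=1$, $a=1$, since $\Phi(1,s,1)=\sum_{m\geq 1}m^{-s}=\zeta(s)$, which makes the factor $\prod x_k^{a-1}$ disappear.

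The step I expect to be the main obstacle is justifying the interchange of summation and integration near the corner $(1,\ldots,1)$, where $\log(\prod x_k)\to 0$ and, when $|z|=1$, also $1-z\prod x_k\to 0$; this is exactly where the hypothesis \emph{``if the integral converges''} is used. For real $z\in[0,1)$ and $a>0$ the individual integrands $\frac{-s\,\prod x_k^{n+a-1}}{\log(\prod x_k)}$ are nonnegative on $(0,1)^{s+1}$, so Tonelli's theorem applies and $\sum\int=\int\sum$ with both sides in $[0,\infty]$; finiteness of the right-hand integral (the hypothesis) then forces the sum to converge and the two to agree, and the case $z=1$ follows by monotone convergence as $z\uparrow 1$. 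For complex $z$ or $a$ one would instead split into real and imaginary parts and invoke dominated/absolute convergence, using that each single integral $\int_0^1\cdots\int_0^1\frac{\prod x_k^{\alpha-1}}{-\log(\prod x_k)}$ converges because the $1/|\log(\prod x_k)|$ singularity is integrable in dimension $s+1\geq 2$.
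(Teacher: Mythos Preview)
Your argument is correct and is essentially the paper's own proof: both specialize Theorem~\ref{theor1} with $k=s+1$, $a_n=1$, $c_n=0$, $b_n=k+a$ to obtain $\frac{1}{(k+a)^{s}}$ as an $(s{+}1)$-fold cube integral, then multiply by $z^{k}$, sum, and interchange sum and integral (monotone convergence) to produce the geometric factor $\bigl(1-z\prod x_n\bigr)^{-1}$. The only cosmetic difference is that you derive the $\zeta(s)$ formula as the special case $\Phi(1,s,1)$, whereas the paper records a separate starting identity for $\zeta$; your route is at least as clean.
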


\begin{proof}
Take the following identities
\begin{equation*}
\Phi \left( z,s,a\right) =s\sum \limits^{\infty }_{k=0}\int ^{\infty }_{0}\frac{z^{k}}{\left( x+k+a\right) ^{s+1}} dx
\end{equation*}
\begin{equation*}
\frac{s}{s-1}\sum \limits^{\infty }_{n=1}\frac{1}{n^{s-1}} =\sum \limits^{\infty }_{n=1}\int ^{\infty }_{0}\frac{1}{\left( \frac{1}{s} x+n\right) ^{s}} dx.
\end{equation*}
Then we use the theorem \ref{theor1} and we interchange the limit and the sum thanks to the monotone convergence theorem.
\end{proof}

\begin{theorem} 
Let $s>1 $ be an integer and $z \in \mathbb{C}$, then if the integral converges
\begin{equation*}
\Phi \left( z,s,a\right) =\frac{-s}{\left( s-1\right) !} \int ^{1}_{0}\int ^{1}_{0}\frac{\left( xy\right) ^{a-1}\left( -\log\left( y\right) \right) ^{s-1}}{\left( 1-zxy\right) \log\left( xy\right) } dxdy
\end{equation*}
\begin{equation*}
\zeta \left( s\right) =\frac{s}{\left( s-1\right) !} \left( -1\right) ^{s}\int ^{1}_{0}\int ^{1}_{0}\frac{\log^{s-1}(y)}{\left( 1-xy\right) \log\left( xy\right) } dxdy.
\end{equation*}
\end{theorem}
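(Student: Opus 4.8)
The plan is to derive both formulas from Theorem~\ref{NT} by \emph{collapsing} $s$ of the $s+1$ integration variables into a single variable equal to their product. In the representation
\[
\Phi(z,s,a)=\int_0^1\overset{s+1}{\dots}\int_0^1\frac{-s\,\prod_{n=1}^{s+1}x_n^{\,a-1}}{\left(1-z\prod_{n=1}^{s+1}x_n\right)\log\left(\prod_{n=1}^{s+1}x_n\right)}\,\prod_{n=1}^{s+1}dx_n
\]
the integrand depends on $x_2,\dots,x_{s+1}$ only through the single quantity $y:=\prod_{n=2}^{s+1}x_n$: indeed $\prod_{n=1}^{s+1}x_n^{\,a-1}=(x_1y)^{a-1}$, $\prod_{n=1}^{s+1}x_n=x_1y$, and $\log\left(\prod_{n=1}^{s+1}x_n\right)=\log(x_1y)$. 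Setting $x:=x_1$, I would integrate out $x_2,\dots,x_{s+1}$ by pushing the Lebesgue measure on $[0,1]^s$ forward under the product map $(x_2,\dots,x_{s+1})\mapsto y$.

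The key step is the identity that this pushforward measure is $\dfrac{(-\log y)^{s-1}}{(s-1)!}\,dy$ on $[0,1]$; equivalently, a product of $s$ independent $\mathrm{Uniform}[0,1]$ random variables has density $f_s(y)=\frac{(-\log y)^{s-1}}{(s-1)!}$. I would prove this by induction on $s$. For $s=1$ it reads $f_1\equiv 1$. If $Y_s$ has density $f_s$ and $U$ is an independent uniform variable, then $Y_{s+1}=Y_sU$ has density
\[
f_{s+1}(y)=\int_y^1 f_s\!\left(\frac{y}{u}\right)\frac{du}{u}=\frac{1}{(s-1)!}\int_y^1\frac{\left(\log u-\log y\right)^{s-1}}{u}\,du=\frac{(-\log y)^{s}}{s!},
\]
the last equality following from the substitution $t=\log u-\log y$. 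Feeding this density into the displayed $(s+1)$-fold integral and renaming $x_1$ as $x$ collapses it to
\[
\frac{-s}{(s-1)!}\int_0^1\int_0^1\frac{(xy)^{a-1}(-\log y)^{s-1}}{(1-zxy)\log(xy)}\,dx\,dy,
\]
which is the first asserted identity. For the second identity I would specialise to $z=1$, $a=1$. Then $\Phi(1,s,1)=\sum_{k\ge 0}(k+1)^{-s}=\zeta(s)$ (the series converging because $s>1$), while $(-\log y)^{s-1}=(-1)^{s-1}\log^{s-1}(y)$, so the first identity becomes
\[
\zeta(s)=\frac{-s\,(-1)^{s-1}}{(s-1)!}\int_0^1\int_0^1\frac{\log^{s-1}(y)}{(1-xy)\log(xy)}\,dx\,dy=\frac{s\,(-1)^{s}}{(s-1)!}\int_0^1\int_0^1\frac{\log^{s-1}(y)}{(1-xy)\log(xy)}\,dx\,dy,
\]
exactly the claimed formula.

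The main obstacle is analytic rather than algebraic: the integrands are singular along $\prod_{n}x_n=1$ (where the logarithm vanishes) and the geometric series $\frac{1}{1-z\prod x_n}=\sum_{k\ge 0}z^k(\prod x_n)^k$ behind Theorem~\ref{NT} converges only conditionally near that locus, so the marginalisation and the interchange of summation and integration must be justified; under the standing hypothesis that the integral converges this is done exactly as in the proof of Theorem~\ref{NT}, by applying monotone/dominated convergence to the partial sums. A self-contained alternative avoiding Theorem~\ref{NT} is to expand $\frac{1}{1-zxy}=\sum_{k\ge 0}z^k(xy)^k$, substitute $x=e^{-u}$, $y=e^{-v}$, pass to $w=u+v$, integrate out the remaining segment variable to get $\int_0^w v^{s-1}\,dv=\frac{w^{s}}{s}$, and recognise the surviving integral $-\frac{1}{s}\int_0^\infty e^{-(k+a)w}w^{s-1}\,dw=-\frac{\Gamma(s)}{s(k+a)^{s}}$; summing over $k$ again returns $-\frac{(s-1)!}{s}\,\Phi(z,s,a)$, which is the first identity once more.
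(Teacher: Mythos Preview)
Your proof is correct and follows essentially the same route as the paper: both start from the $(s+1)$-fold integral of Theorem~\ref{NT} and collapse $s$ of the variables into their product, the paper via the iterated substitution $(x,y)\to(x,u/x)$ (sketched only for $s=2$), and you via the equivalent computation of the pushforward density $f_s(y)=\frac{(-\log y)^{s-1}}{(s-1)!}$, whose inductive step is precisely that substitution. Your self-contained alternative through the exponential change of variables and the Gamma integral is a clean extra not in the paper.
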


\begin{proof}
We use the variable change $(x,y) \to (x,u/x)$ to the formula of the theorem \ref{NT} with $s=2$.
\end{proof}

\begin{theorem}\label{theor6}
Let $z \in \mathbb{C}$ if the integral converges
\begin{equation}
\Phi \left( z,s,a\right) \Gamma(s) =\int ^{1}_{0}\int ^{1}_{0}\frac{-s\left( xy\right) ^{a-1}\left( -\log\left( y\right) \right) ^{s-1}}{\left( 1-zxy\right) \log\left( xy\right) } dxdy
\end{equation}
\begin{equation*}
\zeta \left( s\right) \Gamma(s) =\int ^{1}_{0}\int ^{1}_{0}\frac{-s(-\log(y))^{s-1}}{\left( 1-xy\right) \log\left( xy\right) } dxdy.
\end{equation*}
\end{theorem}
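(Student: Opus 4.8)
For integer $s$ this identity is just the previous theorem multiplied through by $\Gamma(s)=(s-1)!$, so the real content is that it persists for non-integer $s$, where the device of introducing $s+1$ integration variables used for Theorem~\ref{NT} is no longer available. The plan is therefore to prove the first identity directly, working with the double integral from the outset, and then obtain the $\zeta$ identity by setting $z=1$, $a=1$ and using $\Phi(1,s,1)=\sum_{k\ge 0}(k+1)^{-s}=\zeta(s)$.

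I would start from the two classical formulas
\begin{equation*}
\Phi(z,s,a)\,\Gamma(s)=\sum_{k=0}^{\infty}\frac{z^{k}\,\Gamma(s)}{(k+a)^{s}},\qquad \frac{\Gamma(s)}{w^{s}}=\int_{0}^{1}u^{\,w-1}(-\log u)^{s-1}\,du,
\end{equation*}
the second coming from the definition of $\Gamma$ by the substitution $u=e^{-t}$. The core step is the two–variable analogue of the second formula: for $\Re w>0$ and $\Re s>1$,
\begin{equation*}
\int_{0}^{1}\!\!\int_{0}^{1}\frac{-s\,(xy)^{w-1}(-\log y)^{s-1}}{\log(xy)}\,dx\,dy=\frac{\Gamma(s)}{w^{s}}.
\end{equation*}
To prove this I would fix $y\in(0,1)$, substitute $u=xy$ in the inner integral, apply Fubini on the region $0<u<y<1$ to integrate first in $y$, evaluate $\int_{u}^{1}(-\log y)^{s-1}\,dy/y=(-\log u)^{s}/s$ via $v=-\log y$, and recognize the remainder as $\tfrac1s\int_{0}^{1}u^{\,w-1}(-\log u)^{s-1}\,du$; multiplying by $s$ and quoting the second classical formula closes it. (For integer $s$ this is exactly Theorem~\ref{theor1} with $k=2$ followed by the change of variables of the preceding theorem.)

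The rest is assembly: expand $(1-zxy)^{-1}=\sum_{k\ge 0}z^{k}(xy)^{k}$ inside the right–hand side of the theorem, interchange the sum with the double integral, and apply the core identity with $w=k+a$ term by term; the resulting series is $\sum_{k\ge 0}z^{k}\Gamma(s)(k+a)^{-s}=\Phi(z,s,a)\Gamma(s)$, and the case $z=1$, $a=1$ yields the $\zeta$ formula. The point needing the most care is the legitimacy of the two interchanges — the geometric series against the double integral, and the Fubini swap: for $0\le z<1$ and real $s>1$ every integrand is nonnegative and monotone convergence suffices, while for general $z$ in the domain of convergence one uses $|(xy)^{k}|\le 1$ together with the integrability of $(xy)^{\Re w-1}(-\log y)^{\Re s-1}/|\log(xy)|$ near the corner $(1,1)$, which is precisely the convergence hypothesis assumed in the statement.
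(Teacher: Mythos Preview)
Your argument is correct. The paper itself gives no proof of this theorem; it is stated immediately after the integer-$s$ version (the unnumbered theorem following Theorem~\ref{NT}), with $(s-1)!$ replaced by $\Gamma(s)$, and the reader is evidently meant to accept it as the natural extension. That integer-$s$ version in turn is obtained from the $(s+1)$-fold integral of Theorem~\ref{NT} by the change of variables $(x,y)\to(x,u/x)$ --- an approach which, as you note, cannot reach non-integer $s$ directly and would need an analytic-continuation argument the paper never supplies.

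Your direct computation fills this gap. Instead of detouring through an $(s+1)$-fold integral and collapsing it, you establish the two-variable kernel identity
\[
\int_{0}^{1}\!\!\int_{0}^{1}\frac{-s\,(xy)^{w-1}(-\log y)^{s-1}}{\log(xy)}\,dx\,dy=\frac{\Gamma(s)}{w^{s}}
\]
from scratch via $u=xy$ and a Fubini swap; this holds for any $\Re s>1$ and, summed against the geometric series, yields the theorem immediately. The substitution you use is in spirit the same reduction the paper applies to collapse its higher-dimensional integral, but by starting from the double integral you obtain a self-contained proof valid for all admissible $s$, whereas the paper's route produces only the integer case and then asserts the general one.
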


\begin{theorem}\label{theor7}
Let $z \in \mathbb{C}$ if the integral converges
\begin{align*}
&\frac{\left( s-2\right) \left( -1\right)^{s} }{s} \Phi \left( z,s,a\right) \Gamma \left( s\right) \\
=&\int ^{1}_{0}\int ^{1}_{0}\frac{\left( xy\right) ^{a-1}}{\left( 1-zxy\right) \log\left( xy\right) } \sum \limits^{s-2}_{k=1}\dbinom{s-1}{k} \log^{k}\left( x\right) \log^{s-k-1}\left( y\right) dxdy
\end{align*}
\end{theorem}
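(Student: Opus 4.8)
The plan is to combine Theorem~\ref{theor6} with a binomial expansion of $\log(xy)^{s-1}$ and one direct evaluation of a ``mixed'' double integral. Throughout I would take $s\ge 2$ to be an integer, as in the preceding theorems, so that $\Gamma(s)=(s-1)!$ and the binomial sum is meaningful (for $s=2$ the sum is empty and both sides vanish). The first observation is that the kernel $\dfrac{-s\,(xy)^{a-1}}{(1-zxy)\log(xy)}$ is symmetric under $x\leftrightarrow y$, so Theorem~\ref{theor6} remains valid with $(-\log y)^{s-1}$ replaced by $(-\log x)^{s-1}$.

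Next I would write $\log(xy)=\log x+\log y$ and expand
\[
(-\log(xy))^{s-1}=(-1)^{s-1}\sum_{k=0}^{s-1}\binom{s-1}{k}\log^{k}(x)\log^{s-1-k}(y),
\]
noting that the $k=0$ and $k=s-1$ terms are exactly $(-\log y)^{s-1}$ and $(-\log x)^{s-1}$. Multiplying through by the kernel, integrating over $[0,1]^{2}$, and applying Theorem~\ref{theor6} together with its mirror image to those two extreme terms gives
\begin{align*}
I&:=\int_{0}^{1}\!\!\int_{0}^{1}\frac{-s\,(xy)^{a-1}(-\log(xy))^{s-1}}{(1-zxy)\log(xy)}\,dx\,dy\\
&=2\,\Phi(z,s,a)\Gamma(s)-s\,(-1)^{s-1}R,
\end{align*}
where $R$ denotes precisely the double integral appearing on the right-hand side of the statement.

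It then remains to evaluate $I$ on its own. Here I would use the substitution $(x,y)\mapsto(x,u/x)$ with $u=xy$ exactly as in Theorems~\ref{theor4}, \ref{theor5} and \ref{theor6}; since the inner integration contributes a factor $\int_{u}^{1}dx/x=-\log u$, this collapses $I$ to the single integral $-s(-1)^{s}\int_{0}^{1}\frac{u^{a-1}\log^{s-1}(u)}{1-zu}\,du$. Expanding $\dfrac{1}{1-zu}=\sum_{j\ge0}z^{j}u^{j}$, integrating term by term, and using the elementary moment $\int_{0}^{1}u^{m-1}\log^{s-1}(u)\,du=(-1)^{s-1}(s-1)!/m^{s}$ with $m=a+j$, everything collapses to $I=s\,\Gamma(s)\,\Phi(z,s,a)$. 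Comparing the two formulas for $I$ yields $s(-1)^{s-1}R=-(s-2)\Gamma(s)\Phi(z,s,a)$, hence $R=\dfrac{(s-2)(-1)^{s}}{s}\,\Phi(z,s,a)\Gamma(s)$, which is the assertion.

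The main obstacle — and essentially the only step that is not bookkeeping — is justifying the termwise integration of the geometric series in the evaluation of $I$ (and the implicit interchange hidden in the substitution step); this is the kind of interchange licensed by the stated convergence hypothesis, and one would argue it as in the proof of Theorem~\ref{NT}, splitting the series into parts of constant sign and invoking monotone or dominated convergence. The moment identity and the binomial algebra are routine.
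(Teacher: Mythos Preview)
Your argument is correct and follows essentially the same route as the paper: expand $(-\log(xy))^{s-1}$ by the binomial theorem, peel off the $k=0$ and $k=s-1$ terms using formula~(2), and compare. The only difference is that where the paper invokes the Guillera--Sondow formula $\Phi(z,s,a)\Gamma(s)=\int_{0}^{1}\!\int_{0}^{1}\frac{(xy)^{a-1}}{1-zxy}(-\log(xy))^{s-2}\,dx\,dy$ to identify the full integral $I$, you instead re-derive that identity on the spot via the substitution $u=xy$ and the moment $\int_{0}^{1}u^{m-1}\log^{s-1}(u)\,du=(-1)^{s-1}(s-1)!/m^{s}$, which makes your version self-contained but otherwise identical in structure.
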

\begin{proof}
Take Guillera's and Sondow's formula \cite{S2}
\begin{equation*}
\Phi \left( z,s,a\right) \Gamma \left( s\right) =\int ^{1}_{0}\int ^{1}_{0}\frac{\left( xy\right) ^{a-1}}{1-zxy} \left( -\log\left( xy\right) \right) ^{s-2}dxdy,
\end{equation*}
and use the binomial theorem for $\left( -\log\left( xy\right) \right) ^{s-2}$ and the formula $(2)$.
\end{proof}

The following theorems are about the Euler gamma constant.
In the following corollary we recover a formula by Sondow \cite{S1} and we give another proof of his formula.
\begin{corollary}
The following formula for $\gamma$ holds: 
\begin{equation*}
\gamma =-\int ^{1}_{0}\int ^{1}_{0}\frac{ 1-x}{(1-xy) \log (xy)} dxdy.
\end{equation*}
\end{corollary}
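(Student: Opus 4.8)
The plan is to deduce Sondow's formula from the Main Theorem (Theorem~\ref{theor1}) in the case $k=2$, applied term by term to the classical telescoping representation
\[
\gamma=\sum_{n=1}^{\infty}\Bigl(\frac1n-\log\frac{n+1}{n}\Bigr),
\]
whose $N$-th partial sum equals $\bigl(\sum_{n=1}^N\frac1n\bigr)-\log(N+1)\to\gamma$.

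First I would invoke Theorem~\ref{theor1} with $k=2$, all $a_v=1$ and all $c_v=0$, for which the first and third expressions there become
\[
\int_0^1\!\!\int_0^1\frac{x^{b_1-1}y^{b_2-1}}{-\log(xy)}\,dx\,dy=\int_0^\infty\frac{dx}{(x+b_1)(x+b_2)} .
\]
Taking $b_1=b_2=n$ makes the left-hand double integral equal to $\int_0^\infty (x+n)^{-2}\,dx=\frac1n$, and taking $b_1=n$, $b_2=n+1$ makes it equal to $\int_0^\infty \frac{dx}{(x+n)(x+n+1)}=\log\frac{n+1}{n}$; in both cases the single integral converges, so the hypothesis of the theorem is met. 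Subtracting these two identities,
\[
\frac1n-\log\frac{n+1}{n}=\int_0^1\!\!\int_0^1\frac{x^{n-1}y^{n-1}(1-y)}{-\log(xy)}\,dx\,dy .
\]

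Next I would sum over $n\ge1$. For $x,y\in(0,1)$ one has $-\log(xy)>0$ and $1-y\ge0$, so every integrand is nonnegative and the monotone convergence theorem permits interchanging $\sum_{n\ge1}$ with the double integral; using $\sum_{n\ge1}(xy)^{n-1}=(1-xy)^{-1}$ this yields
\[
\gamma=\int_0^1\!\!\int_0^1\frac{1-y}{-(1-xy)\log(xy)}\,dx\,dy=-\int_0^1\!\!\int_0^1\frac{1-y}{(1-xy)\log(xy)}\,dx\,dy ,
\]
and interchanging the names of $x$ and $y$ (the domain, $1-xy$ and $\log(xy)$ being symmetric) replaces $1-y$ by $1-x$, which is the asserted identity. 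The one delicate point is exactly this exchange of sum and integral, and this is why the argument is arranged so that all terms are nonnegative: one cannot split the numerator $1-y$, since both $\int_0^1\!\int_0^1\frac{dx\,dy}{(1-xy)\log(xy)}$ and $\int_0^1\!\int_0^1\frac{y\,dx\,dy}{(1-xy)\log(xy)}$ diverge, the integrand behaving like $-(1-xy)^{-2}$ near the corner $(1,1)$, whereas the combined integrand carrying the factor $1-y$ is integrable there. As an independent check, one can also verify the formula directly from Sondow's double integral by the substitution $(x,y)\to(x,u/x)$ used in the proofs of Theorems~\ref{theor4} and~\ref{theor5}: after an application of Tonelli's theorem (the integrand has constant sign on $(0,1)^2$) and the elementary evaluation $\int_u^1\frac{1-x}{x}\,dx=u-1-\log u$, the double integral collapses to the classical representation $\gamma=\int_0^1\bigl(\frac1{\log u}+\frac1{1-u}\bigr)\,du$.
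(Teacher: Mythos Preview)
Your proof is correct and follows essentially the same approach as the paper: both evaluate, via the Main Theorem with $k=2$ and $a_v=1$, $c_v=0$, the double integrals corresponding to $\frac{1}{n}$ and $\log\frac{n+1}{n}$, subtract, and then sum the geometric series using monotone convergence to recover $\gamma=\sum_n\bigl(\frac1n-\log\frac{n+1}{n}\bigr)$. The only cosmetic differences are that the paper starts from the integral and expands $(1-xy)^{-1}$ (indexing from $n=0$) while you start from the series, and the paper keeps the factor $(1-x)$ throughout so it does not need your final $x\leftrightarrow y$ swap; your added remarks on the divergence of the split integrals and the reduction to $\int_0^1\bigl(\frac1{\log u}+\frac1{1-u}\bigr)du$ are extra checks not present in the paper.
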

\begin{proof}

\begin{equation*}
-\int ^{1}_{0}\int ^{1}_{0}\frac{ 1-x}{(1-xy)\log(xy)} dxdy=\sum \limits^{\infty }_{n=0}\int ^{1}_{0}\int ^{1}_{0}(1-x)\frac{\left( xy\right) ^{n}}{-\log(xy)} dxdy 
\end{equation*}
\begin{equation*}
=\sum \limits^{\infty }_{n=0}\int ^{1}_{0}\int ^{1}_{0}\left( \frac{(xy)^{n}}{-\log (xy)} +\frac{x^{n+1}y^{n}}{\log (xy)} \right) dxdy = \sum \limits^{\infty }_{n=0}\left( \frac{1}{n+1} -\log \left( \frac{n+2}{n+1} \right) \right)=\gamma,
\end{equation*}
where the penultimate step follows from Theorem \ref{theor1}, and the last one from the definition of the Euler constant.
\end{proof}

We can generalized the last formula with the next theorem,
\begin{theorem}\label{theor8}
Let $ a, \, b, \, c$ and $ d $ be real numbers, if the integral converges
\begin{align*}
\int ^{1}_{0}\int ^{1}_{0}\frac{(xy)^{b}\left[ \left( xy\right) ^{d-1}-x^{a-1}y^{c-1}\right] }{-\left[ 1-(xy)^{b}\right] \log (xy)}dxdy &= \sum \limits^{\infty }_{n=1}\left( \frac{1}{bn+d} -\frac{\log\left( 1+\frac{c-a }{bn+a} \right) }{c-a } \right) 
\\
&= \frac{\log\left( \frac{\Gamma \left( \frac{b+c }{b} \right) }{\Gamma \left( \frac{a+b}{b} \right) } \right) }{c-a} -\frac{\psi \left( \frac{b+d}{b} \right) }{b},
\end{align*}
where $\psi$ is the the digamma function.
\end{theorem}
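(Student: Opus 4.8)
The plan is to peel off the factor $\frac{(xy)^b}{1-(xy)^b}$ as a geometric series, apply the Main Theorem (Theorem \ref{theor1}) to each term, and then evaluate the resulting numerical series in closed form via the standard asymptotics of $\psi$ and $\Gamma$.

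First I would write, for $x,y\in(0,1)$,
\[
\frac{(xy)^{b}}{1-(xy)^{b}}=\sum_{n=1}^{\infty}(xy)^{bn},
\]
so that the integrand of the left-hand side becomes
\[
\sum_{n=1}^{\infty}\frac{(xy)^{bn+d-1}-x^{bn+a-1}y^{bn+c-1}}{-\log(xy)}.
\]
Under the standing hypothesis that the integral converges (together with the implicit sign conditions, e.g.\ $b>0$, that make the geometric expansion and the tail of the series meaningful), I would justify interchanging $\sum$ with $\int_0^1\int_0^1$. This is the step I expect to be the main technical obstacle: each individual summand carries the boundary singularity coming from $1/\log(xy)$ as $(x,y)\to(1,1)$, so one should group the two pieces of each summand — their difference vanishes like $O(|\log(xy)|)$ near $(1,1)$, making each combined integrand bounded there — and then invoke monotone or dominated convergence exactly as in the proofs of the earlier corollaries in this section.

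Next I would apply Theorem \ref{theor1} with $k=2$ and $c_1=c_2=0$ to each summand. Taking $a_1=a_2=1$ and $b_1=b_2=bn+d$ gives
\[
\int_{0}^{1}\int_{0}^{1}\frac{(xy)^{bn+d-1}}{-\log(xy)}\,dx\,dy=\int_{0}^{\infty}\frac{dx}{(x+bn+d)^{2}}=\frac{1}{bn+d},
\]
and taking $b_1=bn+a$, $b_2=bn+c$ gives, after partial fractions,
\[
\int_{0}^{1}\int_{0}^{1}\frac{x^{bn+a-1}y^{bn+c-1}}{-\log(xy)}\,dx\,dy=\int_{0}^{\infty}\frac{dx}{(x+bn+a)(x+bn+c)}=\frac{1}{c-a}\log\!\left(1+\frac{c-a}{bn+a}\right).
\]
Subtracting these yields the $n$-th term of the first series, which establishes the first equality.

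For the second equality I would evaluate the partial sums $S_{N}=\sum_{n=1}^{N}\bigl(\frac{1}{bn+d}-\frac{1}{c-a}\log\frac{bn+c}{bn+a}\bigr)$ using the telescoping identities $\sum_{n=1}^{N}\frac{1}{bn+d}=\frac{1}{b}\bigl(\psi(N+1+\tfrac{d}{b})-\psi(\tfrac{b+d}{b})\bigr)$ and $\prod_{n=1}^{N}\frac{bn+c}{bn+a}=\frac{\Gamma(N+1+c/b)\,\Gamma(1+a/b)}{\Gamma(N+1+a/b)\,\Gamma(1+c/b)}$, together with the expansions $\psi(x)=\log x+o(1)$ and $\log\Gamma(x+\alpha)-\log\Gamma(x+\beta)=(\alpha-\beta)\log x+o(1)$ as $x\to\infty$. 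The two $\tfrac{1}{b}\log N$ contributions then cancel, leaving
\[
\lim_{N\to\infty}S_{N}=-\frac{\psi\!\left(\frac{b+d}{b}\right)}{b}+\frac{1}{c-a}\log\frac{\Gamma(1+c/b)}{\Gamma(1+a/b)}=\frac{\log\!\left(\frac{\Gamma\left(\frac{b+c}{b}\right)}{\Gamma\left(\frac{a+b}{b}\right)}\right)}{c-a}-\frac{\psi\!\left(\frac{b+d}{b}\right)}{b},
\]
which is the asserted closed form. The two delicate points are the cancellation of the divergent logarithms (controlled by the uniformity of the asymptotic expansions) and, as already flagged, the legitimacy of the termwise integration.
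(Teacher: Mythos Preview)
Your argument is correct and follows essentially the same route as the paper: expand the factor $(xy)^{b}/(1-(xy)^{b})$ as a geometric series, use Theorem~\ref{theor1} with $k=2$ to evaluate each term (the paper phrases this in the reverse direction, starting from the single integrals $\int_0^\infty(hx+i+jn)^{-2}\,dx$ and $\int_0^\infty((x+b+cn)(x+g+cn))^{-1}\,dx$ and turning them into unit-square integrals), and then interchange sum and integral by monotone convergence. Your write-up is in fact more complete than the paper's, since you also supply the passage from the series to the $\Gamma/\psi$ closed form via the partial-sum identities and the asymptotics $\psi(x)=\log x+o(1)$, $\log\Gamma(x+\alpha)-\log\Gamma(x+\beta)=(\alpha-\beta)\log x+o(1)$; the paper simply states this second equality without proof.
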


\begin{proof}
First we have these two integrals with $ a, \, b, \, c, \, d, \, k, \, g, \, h, \, i$ and $j$ being real numbers (with the condition that the integral converges)
\begin{equation*}
\int ^{\infty }_{0}\frac{1}{(ax+b+cn)(dx+g+kn)} dx=\frac{\log\left( \frac{d}{a} \right) -\log\left( \frac{kn+g}{cn+b} \right) }{-a (kn+g)+bd+cdn},
\end{equation*}
\begin{equation*}
\int ^{\infty }_{0}\frac{1}{(hx+i+jn)^{2}} dx=\frac{1}{h (jn+i)},
\end{equation*}
so, with the help of the theorem \ref{theor1}, and choosing $a=d=1$, $ k=c $ (with $ b\neq g$ and $c, k \neq 0$) and keeping the last hypothesis, we get
\begin{equation*}
\sum \limits^{\infty }_{n=1}\left( \frac{1}{h(jn+i)}+\frac{\log\left(\frac{cn+g}{cn+b} \right) }{b-g} \right),
\end{equation*}
\begin{equation*}
=\sum \limits^{\infty }_{n=1}\left( \int ^{1}_{0}\int ^{1}_{0}\frac{(xy)^{i+jn-1}}{-h\log (xy)} dxdy-\int ^{1}_{0}\int ^{1}_{0}\frac{x^{b+cn-1}y^{g+cn-1}}{-\log (xy)} dxdy\right),
\end{equation*}
Then, doing $a=d=h=1$, $c=j=k$ (so $c, j, k\neq 0$), and keeping the condition that the series converges, we see that the sum above is equal to
\begin{equation*}
\sum \limits^{\infty }_{n=1}\int ^{1}_{0}\int ^{1}_{0}\frac{(xy)^{i-1+jn}-x^{b-1}y^{g-1}(xy)^{cn}}{-\log(xy)} dydx,
\end{equation*}
so
\begin{equation*}
\sum \limits^{\infty }_{n=1}\left( \frac{1}{cn+i} +\frac{\log\left( 1+\frac{g-b}{cn+b} \right) }{b-g} \right) 
=\sum \limits^{\infty }_{n=1}\int ^{1}_{0}\int ^{1}_{0}\frac{(xy)^{cn}\left[ (xy)^{i-1}-x^{b-1}y^{g-1}\right] }{-\log (xy)} dxdy.
\end{equation*}
According to the monotone convergence theorem we can interchange the sum and the double integral. Hence

\begin{equation*}
\int ^{1}_{0}\int ^{1}_{0}\frac{(xy)^{c}\left[ (xy)^{i-1}-x^{b-1}y^{g-1}\right] }{-(1-(xy)^{c})\log(xy)} dxdy=\sum \limits^{\infty }_{n=1}\left( \frac{1}{cn+i} -\frac{\log\left(1+\frac{g-b}{cn+b} \right) }{g-b} \right). 
\end{equation*}
Finally, rename the parameters.
\end{proof}

Finally, we give a double integral for Digamma function.
\begin{theorem}\label{theor9}
Let $s\in\mathbb{C}$, if the integral converges
\begin{equation*}
\psi \left( s\right) =\int ^{1}_{0}\int ^{1}_{0}\frac{\left( xy\right) ^{s-1}-y}{\left( 1-xy\right) \log\left( xy\right) } dxdy\end{equation*}
where $ \psi \left( s\right) $ is the Digamma function.
\end{theorem}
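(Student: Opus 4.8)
The plan is to expand the Cauchy kernel $1/(1-xy)$ as a geometric series and reduce the resulting terms to the evaluations supplied by Theorem \ref{theor1}. Writing $\frac{1}{1-xy}=\sum_{n\ge 0}(xy)^{n}$ on $(0,1)^{2}$ and interchanging summation with the double integral (the point to be justified last), the integral becomes
\[
\sum_{n=0}^{\infty}\left(\int_{0}^{1}\!\int_{0}^{1}\frac{(xy)^{s+n-1}}{\log(xy)}\,dx\,dy-\int_{0}^{1}\!\int_{0}^{1}\frac{x^{n}y^{n+1}}{\log(xy)}\,dx\,dy\right).
\]
To the first inner integral I will apply Theorem \ref{theor1} with $k=2$, $a_{1}=a_{2}=1$, $c_{1}=c_{2}=0$, $b_{1}=b_{2}=s+n$, obtaining $-\int_{0}^{\infty}\frac{dx}{(x+s+n)^{2}}=-\frac{1}{s+n}$; to the second, the same theorem with $b_{1}=n+1$, $b_{2}=n+2$, obtaining $-\int_{0}^{\infty}\frac{dx}{(x+n+1)(x+n+2)}=-\log\frac{n+2}{n+1}$. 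This reduces the claim to the purely numerical identity $\sum_{n\ge0}\bigl(\log\frac{n+2}{n+1}-\frac{1}{n+s}\bigr)=\psi(s)$.

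I would then evaluate that series through its partial sums: the logarithms telescope to $\sum_{n=0}^{N}\log\frac{n+2}{n+1}=\log(N+2)$, while the recurrence $\psi(z+1)=\psi(z)+\frac{1}{z}$ gives $\sum_{n=0}^{N}\frac{1}{n+s}=\psi(N+s+1)-\psi(s)$; hence the $N$-th partial sum is $\log(N+2)-\psi(N+s+1)+\psi(s)$, which tends to $\psi(s)$ by the asymptotic $\psi(z)=\log z+O(1/z)$. (Equivalently, splitting off $\frac{1}{n+1}$ from each term and using the series $\gamma=\sum_{n\ge0}\bigl(\frac{1}{n+1}-\log\frac{n+2}{n+1}\bigr)$ already invoked in the corollary for $\gamma$, the sum becomes $-\gamma+\sum_{n\ge0}\bigl(\frac{1}{n+1}-\frac{1}{n+s}\bigr)=\psi(s)$.)

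The main obstacle is the term-by-term integration in the first step: for general $s$ the $n$-th summand $\frac{(xy)^{s+n-1}-x^{n}y^{n+1}}{\log(xy)}$ is not of one sign on $(0,1)^{2}$, and the series of its $L^{1}(0,1)^{2}$-norms diverges (each is of order $1/n$, summability appearing only after the cancellation between the two pieces), so unlike in the corollary for $\gamma$ one cannot simply invoke Tonelli. I would handle this as in the proofs of Theorem \ref{cor0} and Corollary \ref{cor1}: insert a factor $r\in(0,1)$ in front of $xy$, for which the series of (now $r^{n}$-weighted) $L^{1}$-norms is summable, so that Fubini is legitimate and gives
\[
\int_{0}^{1}\!\int_{0}^{1}\frac{(xy)^{s-1}-y}{(1-rxy)\log(xy)}\,dx\,dy=\sum_{n=0}^{\infty}r^{n}\Bigl(\log\tfrac{n+2}{n+1}-\tfrac{1}{n+s}\Bigr),
\]
and then let $r\uparrow1$: on the right Abel's theorem applies because $\sum_{n}\bigl(\log\frac{n+2}{n+1}-\frac{1}{n+s}\bigr)$ converges (its terms are $O(1/n^{2})$), while on the left the integrands are dominated by $\frac{|(xy)^{s-1}-y|}{(1-xy)|\log(xy)|}$, integrable by the standing convergence hypothesis, so dominated convergence delivers the limit. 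The rest is the routine bookkeeping indicated above.
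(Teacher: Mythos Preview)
Your argument is correct and follows essentially the same route as the paper: expand $1/(1-xy)$ as a geometric series, apply Theorem~\ref{theor1} term by term to obtain $\sum_{n\ge 0}\bigl(\log\frac{n+2}{n+1}-\frac{1}{n+s}\bigr)$, and identify this with $\psi(s)$. Your treatment is in fact more careful than the paper's, which simply swaps sum and integral without comment; the $r$-regularization plus Abel/dominated convergence you outline supplies the justification the paper omits.
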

\begin{proof}
Using the sum of the geometric series we get
\begin{equation*}
\int ^{1}_{0}\int ^{1}_{0}\frac{\left( xy\right) ^{s-1}-y}{\left( 1-xy\right) \log\left( xy\right) } dxdy=\int ^{1}_{0}\int ^{1}_{0}\sum \limits^{\infty }_{n=0}\left( xy\right) ^{n}\frac{\left( xy\right) ^{s-1}-y}{\log\left( xy\right) } dxdy,\end{equation*}
Finally, we solve the integral with the help of the main theorem
\begin{equation*}
\sum \limits^{\infty }_{n=0}\int ^{1}_{0}\int ^{1}_{0}\left( \frac{\left( xy\right) ^{n+s-1}}{\log\left( xy\right) } -\frac{x^{n}y^{n+1}}{\log\left( xy\right) } \right) dxdy=\sum \limits^{\infty }_{n=0}\left( \log\left( 1+\frac{1}{n+1} \right) -\frac{1}{n+s} \right) =\psi \left( s\right).\end{equation*}
\end{proof}

\section{Some Ramanujan formulas}
The Ramanujan's paper entitled \textit{Some Definite Integrals} which appeared in Messenger of Mathematics (1915), \cite{R} included, among others, the following formulae: 
\begin{equation*}
\int^{\infty }_{0}\frac{dx}{\left( 1+x^{2}\right) \left( 1+r^{2}x^{2}\right) \left( 1+r^{4}x^{3}\right) \cdots } =
\frac{\pi }{2\left( 1+r+r^{3}+r^{6}+r^{10}+\cdots \right) },
\end{equation*}
\begin{equation*}
\int ^{\infty }_{0}\frac{1}{\left( 1+\frac{x^{2}}{a^{2}} \right) \left( 1+\frac{x^{2}}{\left( a+1\right) ^{2}} \right) \left( 1+\frac{x^{2}}{\left( a+2\right) ^{2}} \right) \cdots } dx=\frac{1}{2} \sqrt{\pi } \frac{\Gamma \left( a+\frac{1}{2} \right) }{\Gamma \left( a\right)},
\end{equation*}

Using our main  theorem we get another expressions for these formulae.

\begin{theorem}
If $ r $ is a real number such that $|r|<1$ and $k\in\mathbb{N}$, then if the integral converges

\begin{align*}&\int ^{\infty }_{0}\overset{2k}{\dots} \int ^{\infty }_{0}\frac{ \cos \left( \sum \limits^{2k}_{n=1} (-1)^{n+1}x_{n}\right) }{\sum \limits^{k}_{n=1} r^{n-1}x_{2n}+ \sum \limits^{k}_{n=1} r^{n-1}x_{2n-1} } \prod \limits^{2k}_{n=1}dx_{n}\\ &=\frac{\pi }{2\left( 1+r+r^{3}+r^{6}+r^{10}+\cdots+r^{\frac{1}{2}k(k-1)} \right) } ,\end{align*}
\begin{equation*}\int ^{\infty }_{0}\overset{2k}{\dots} \int ^{\infty }_{0}\frac{\sin \left( \sum \limits^{2k}_{n=1}(-1)^{n+1} x_{n}\right) }{ \sum \limits^{k}_{n=1} r^{n-1}x_{2n}+ \sum \limits^{k}_{n=1} r^{n-1}x_{2n-1} } \prod \limits^{2k}_{n=1}dx_{n}=0.\end{equation*}
\end{theorem}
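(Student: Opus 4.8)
The plan is to reduce the left-hand multiple integrals to the first Ramanujan integral by applying Theorem \ref{theor1} in reverse. First I would recognize that the right-hand side is exactly Ramanujan's value
\begin{equation*}
\int^{\infty}_{0}\frac{dx}{\prod_{m=1}^{k}\left(1+r^{2(m-1)}x^{2}\right)}=\frac{\pi}{2\left(1+r+r^{3}+\cdots+r^{\frac{1}{2}k(k-1)}\right)},
\end{equation*}
which is the finite truncation of Ramanujan's infinite product formula (the exponents $0,1,3,6,10,\dots$ being the triangular numbers $\tfrac12 m(m-1)$). So it suffices to show that the $2k$-fold integral over $[0,\infty)^{2k}$ equals this single integral.

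The key step is to factor $1+r^{2(m-1)}x^{2}=(1-r^{m-1}xi)(1+r^{m-1}xi)$, so that
\begin{equation*}
\prod_{m=1}^{k}\left(1+r^{2(m-1)}x^{2}\right)=\prod_{m=1}^{k}(1+r^{m-1}xi)(1-r^{m-1}xi).
\end{equation*}
This is a product of $2k$ linear factors in $x$ of the shape $a_{n}x+b_{n}+c_{n}i$ with all $b_{n}=0$, $a_{n}=r^{m-1}$ (each value of $r^{m-1}$ appearing twice, once for each sign of the imaginary part), and $c_{n}=0$. Now I apply the middle identity of Theorem \ref{theor1} with $k$ replaced by $2k$: writing the reciprocal of this product as a $2k$-fold Laplace-type integral, the exponential in the numerator becomes $e^{-\sum_{n}b_{n}x_{n}-i\sum_{n}c_{n}x_{n}}=e^{-i\sum_{n}c_{n}x_{n}}$, and since the $c_{n}$ are $\pm1$ arranged so that the factor $(1+r^{m-1}xi)$ contributes $+1$ and $(1-r^{m-1}xi)$ contributes $-1$, the phase is $\sum_{n=1}^{2k}(-1)^{n+1}x_{n}$ after relabeling; the denominator $\sum_{n}a_{n}x_{n}$ becomes $\sum_{m=1}^{k}r^{m-1}(x_{2m-1}+x_{2m})$, matching the stated denominator. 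Taking real and imaginary parts of $e^{-i\sum(-1)^{n+1}x_{n}}$ gives the cosine and sine integrals respectively, and since the single integral $\int_0^\infty \prod(1+r^{2(m-1)}x^2)^{-1}dx$ is real, its imaginary part vanishes, yielding the second formula. The real part gives Ramanujan's value, yielding the first.

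I would lay out the argument in this order: (1) state Ramanujan's finite-product evaluation and note it is real; (2) perform the factorization into $2k$ conjugate linear factors and identify the parameters $a_n,b_n,c_n$; (3) invoke Theorem \ref{theor1} (the equality of the third and second expressions, read right-to-left) to rewrite the single integral as a $2k$-fold integral with integrand $e^{-i\sum_n c_n x_n}/\sum_n a_n x_n$; (4) separate into real and imaginary parts to obtain the cosine integral (equal to Ramanujan's value) and the sine integral (equal to zero because the original integral is real).

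The main obstacle is bookkeeping rather than anything deep: one must check carefully that the $2k$ linear factors, when paired off as $r^{m-1}(1\pm xi)$ for $m=1,\dots,k$, produce exactly the index pattern $(-1)^{n+1}$ in the phase and the grouping $x_{2m-1}+x_{2m}$ in the denominator, which forces a specific ordering of the indices $n=1,\dots,2k$ in the application of Theorem \ref{theor1} and a matching relabeling of the dummy variables. A secondary technical point is the convergence/interchange hypothesis: Theorem \ref{theor1} requires the single integral to converge, which for $|r|<1$ it does since the integrand decays like $x^{-2k}$, so the hypothesis \emph{``if the integral converges''} in the statement is automatically satisfied, but one should remark that the real/imaginary part splitting is legitimate because the $2k$-fold integral converges absolutely (its integrand is bounded by $1/\sum_n a_n x_n$ times an integrable comparison after the reverse change of variables). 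Everything else is a direct substitution into the already-proved Main Theorem.
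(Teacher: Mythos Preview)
Your approach is exactly the one the paper intends: the paper offers no detailed proof beyond the remark ``Using our main theorem we get another expressions for these formulae,'' and your argument---factor $1+r^{2(m-1)}x^{2}$ into two conjugate linear factors, invoke Theorem~\ref{theor1} with $2k$ factors having $b_n=0$, $a_n=r^{m-1}$, $c_n=\pm1$, and separate real and imaginary parts---is precisely that. One small slip: in your parameter identification you wrote ``$c_n=0$'' where you meant $c_n=\pm1$ (as you use two lines later); also, to fit the literal form $a_nx+b_n+c_ni$ with real $a_n$ you should write the factors as $(r^{m-1}x+i)(r^{m-1}x-i)$ rather than $(1\pm r^{m-1}xi)$, though of course the products agree.
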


\begin{theorem}
Let $ a $ be a positive real numbers and $k\in\mathbb{N}$, then if the integral converges
\begin{equation*} \lim \limits_{k\to \infty } \int ^{\infty }_{0}\overset{2k}{\dots} \int ^{\infty }_{0}\frac{\cos \left( \sum \limits^{2k}_{n=1}(-1)^{n+1}x_{n}\right) }{\sum \limits^{k}_{n=1} \frac{x_{2n}}{a+n-1}+\sum \limits^{k}_{n=1} \frac{x_{2n-1}}{a+n-1}} \prod \limits^{2k}_{n=1}dx_{n}= \frac{1}{2} \sqrt{\pi } \frac{\Gamma \left( a+\frac{1}{2} \right) }{\Gamma \left( a\right)} \end{equation*}
\begin{equation*}\int ^{\infty }_{0}\overset{2k}{\dots} \int ^{\infty }_{0}\frac{\sin \left( \sum \limits^{2k}_{n=1}(-1)^{n+1}x_{n}\right) }{ \sum \limits^{k}_{n=1} \frac{x_{2n}}{a+n-1}+\sum \limits^{k}_{n=1} \frac{x_{2n-1}}{a+n-1} } \prod \limits^{2k}_{n=1}dx_{n}=0.\end{equation*}
\end{theorem}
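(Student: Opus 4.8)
The plan is to reduce both displayed identities to the second Ramanujan formula quoted above, namely
\[
\int_0^\infty \prod_{j=0}^{\infty}\Bigl(1+\tfrac{x^2}{(a+j)^2}\Bigr)^{-1}\,dx=\tfrac12\sqrt{\pi}\,\frac{\Gamma(a+\tfrac12)}{\Gamma(a)},
\]
by first rewriting each quadratic factor $1+x^2/(a+n-1)^2$ as a product of two linear factors in the complex variable. Explicitly, $1+x^2/(a+n-1)^2=\bigl(1+\tfrac{ix}{a+n-1}\bigr)\bigl(1-\tfrac{ix}{a+n-1}\bigr)$, so the truncated product over $n=1,\dots,k$ has exactly $2k$ linear factors of the form $1+\tfrac{c_n x\,i}{1}$ with alternating signs of $i$. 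This is precisely the shape to which Theorem \ref{theor2} applies: with $a_n=b_n=0$ suitably normalized (after pulling the factors into the form $a_n+b_ni+\sum c_i x_i$) the triple-equality of Theorem \ref{theor2} converts $\int_0^\infty \prod_{n=1}^{2k}\bigl(1+\ell_n(x)\bigr)^{-1}dx$, where $\ell_n$ is linear, into a $2k$-fold integral over $[0,\infty)^{2k}$ whose numerator is $e^{-i\sum (-1)^{n+1}x_n}$ and whose denominator is the linear combination $\sum_{n=1}^k \tfrac{x_{2n}}{a+n-1}+\sum_{n=1}^k \tfrac{x_{2n-1}}{a+n-1}$. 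Taking real and imaginary parts of $e^{-i\sum(-1)^{n+1}x_n}$ then produces the cosine statement and the sine statement respectively; the sine integral vanishes because the original one-dimensional integral $\int_0^\infty\prod(1+\tfrac{x^2}{(a+j)^2})^{-1}dx$ is real.

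Concretely, the steps I would carry out are: (1) fix $k$ and write $P_k(x)=\prod_{n=1}^{k}\bigl(1+\tfrac{x^2}{(a+n-1)^2}\bigr)=\prod_{n=1}^{2k}(1+\lambda_n x i)$ where the $\lambda_n$ run through $\pm\tfrac{1}{a+n-1}$ with the sign pattern $(+,-,+,-,\dots)$ matching the $(-1)^{n+1}$ in the statement; (2) apply Theorem \ref{theor2} (with the $c_i$ chosen to be these reciprocals $1/(a+n-1)$ and the $a_n,b_n$ chosen so that $a_n+b_n i$ is $1$, i.e.\ effectively $a_n=1,\,b_n=0$, after a harmless rescaling of the one-dimensional integrand) to obtain
\[
\int_0^\infty \frac{dx}{P_k(x)}
=\int_0^\infty\!\!\overset{2k}{\cdots}\!\!\int_0^\infty
\frac{e^{-i\sum_{n=1}^{2k}(-1)^{n+1}x_n}}{\sum_{n=1}^{k}\tfrac{x_{2n}}{a+n-1}+\sum_{n=1}^{k}\tfrac{x_{2n-1}}{a+n-1}}\prod_{n=1}^{2k}dx_n;
\]
(3) separate real and imaginary parts, getting the cosine multiple integral equal to $\int_0^\infty dx/P_k(x)$ and the sine multiple integral equal to $0$ for every finite $k$; (4) let $k\to\infty$: the finite products $P_k(x)$ increase to $\prod_{j=0}^\infty(1+x^2/(a+j)^2)$, and by monotone/dominated convergence $\int_0^\infty dx/P_k(x)\to\int_0^\infty dx/\prod_{j\ge0}(1+x^2/(a+j)^2)$, which by Ramanujan's formula equals $\tfrac12\sqrt\pi\,\Gamma(a+\tfrac12)/\Gamma(a)$. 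This yields the first displayed identity in the limit, while the sine identity already holds before passing to the limit.

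The main obstacle is the justification of convergence and of the manipulations hidden inside Theorem \ref{theor2}: that theorem is stated "if the multiple integral is convergent," so I must check that for each finite $k$ the $2k$-fold integral on the right is absolutely convergent (the denominator $\sum c_n x_n$ degenerates near the origin of $[0,\infty)^{2k}$, and the numerator has modulus $1$, so integrability near $0$ is the delicate point — it works because $k\ge2$ gives enough decay, exactly as in the hypotheses of Theorem \ref{theor1} and \ref{theor2}), and that the one-dimensional integral $\int_0^\infty dx/P_k(x)$ is finite (clear, since $P_k(x)\sim c\,x^{2k}$ at infinity for $k\ge1$). A second, more subtle point is interchanging $\lim_{k\to\infty}$ with the integral in step (4): I would dominate $1/P_k(x)$ by $1/P_1(x)=(1+x^2/a^2)^{-1}$, which is integrable on $[0,\infty)$, so dominated convergence applies; one must also note that the infinite product in the denominator converges since $\sum 1/(a+j)^2<\infty$. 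Finally, matching the exact sign pattern $(-1)^{n+1}$ and the pairing of indices $x_{2n}$ with $x_{2n-1}$ to the same coefficient $1/(a+n-1)$ is just bookkeeping in the application of Theorem \ref{theor2}, but it should be written out carefully so the statement matches verbatim.
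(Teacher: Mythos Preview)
Your approach is correct and is exactly the one the paper intends (it offers no proof beyond the sentence ``Using our main theorem we get another expressions for these formulae''): factor each $1+x^2/(a+n-1)^2$ into two conjugate linear factors, apply the multiple-integral identity to the truncated product $P_k$, take real and imaginary parts, and then let $k\to\infty$ using Ramanujan's evaluation together with dominated convergence. Your convergence checks (integrability of $1/\sum a_n x_n$ near the origin of $[0,\infty)^{2k}$, and the domination $1/P_k\le 1/P_1$ for the limit in $k$) are more careful than anything the paper supplies.

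The one slip is the citation: the identity you display in step~(2) is precisely the second equality of Theorem~\ref{theor1} (the Main Theorem), with parameters $b_n=0$, $c_n=(-1)^{n+1}$, and $a_{2m-1}=a_{2m}=1/(a+m-1)$, \emph{not} Theorem~\ref{theor2}. In Theorem~\ref{theor2} the denominator of the $k$-fold integral is $\prod_{i=1}^{j}\bigl(c_i\sum_n x_n\bigr)$, i.e.\ a power of a single symmetric sum, so it cannot produce a denominator $\sum_n x_n/(a+\lceil n/2\rceil-1)$ with coefficients depending on $n$. Replace ``Theorem~\ref{theor2}'' by ``Theorem~\ref{theor1}'' and the argument goes through verbatim.
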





\section{Examples}
In this section we'll see some examples and applications of all we have seen before.

\begin{example}
(Theorem \ref{theor1})
Taking $k=2$, setting $x_1=x$ and $x_2=y$, and choosing $a_1=1, b_1=1, c_1=1, a_2=2, b_2=1$ and $c_2=1$ values of the parameters, we see 
\begin{equation*}
\int ^{\infty }_{0}\frac{1}{(x+1+i)(2x+1+i)}dx=\left(\frac{1}{2} -\frac{i}{2}\right)\log(2),
\end{equation*}
we get
\begin{equation*}
\int ^{\infty }_{0}\int ^{\infty }_{0}\frac{e^{-x-y}\sin (x+y)}{2x+y} dydx=
\int ^{\infty }_{0}\int ^{\infty }_{0}\frac{e^{-x-y}\cos (x+y)}{2x+y} dydx=\frac{\log(2)}{2}. 
\end{equation*}
\end{example}

\begin{example} (Theorem \ref{theor1})
Taking $k=3$, setting $x_1=x$, $x_2=y$ and $ x_3=z $, and choosing $a_1=1, b_1=1, c_1=1, a_2=2, b_2=1, c_2=1, a_3=1, b_3=2$ and $c_3=1$ values of the parameters, we see 
\begin{equation*}
I= \int ^{\infty }_{0}\frac{1}{(x+1+i)(2x+1+i)(x+2+i)} dx,
\end{equation*}
we obtain
\begin{equation*}
\int ^{\infty }_{0}\int ^{\infty }_{0}\int ^{\infty }_{0}\frac{e^{-x-y-2z}\cos(x+y+z)}{x+2y+z} dxdy=
\frac{1}{40}\left(\pi-2\arctan\left(\frac{4}{3}\right)+4\hbox{arctanh}\left(\frac{3}{253}\right) \right), 
\end{equation*}
and
\begin{equation*}
\int ^{\infty }_{0}\int ^{\infty }_{0}\int ^{\infty }_{0}\frac{e^{-x-y-2z}\sin(x+y+z)}{x+2y+z} dxdy=-
\frac{1}{40}\left(3\pi+\log(25)-6\left(\log(8)+\arctan\left(\frac{4}{3}\right)\right)\right), 
\end{equation*}
\end{example}

\begin{example}(Theorem \ref{theor1})
Taking $k=3$, setting $x_1=x$, $x_2=y$ and $ x_3=z $, and choosing $a_1=1, b_1=4, c_1=0, a_2=2, b_2=3, c_2=0, a_3=3, b_3=2$ and $c_3=0$ values of the parameters, we see 
\begin{align*}
\int ^{1}_{0}\int ^{1}_{0}\int ^{1}_{0}\frac{x^{3}y^{2}z}{-\log\left( xy^{2}z^{3}\right) } dxdydz&=\int ^{\infty }_{0}\int ^{\infty }_{0}\int ^{\infty }_{0}\frac{e^{-4x-3y-2z}}{x+2y+3z} dxdydz \\
&=\int ^{\infty }_{0}\frac{1}{\left( x+4\right) \left( 2x+3\right) \left( 3x+2\right) } dx \\
&=\frac{1}{50} \log\left( \frac{2187}{512} \right).
\end{align*}
\end{example}

\begin{example}(Theorem \ref{theor1})
Choosing $a_n=1, b_n=1$ and $c_n=0$ values of the parameters, we see
\begin{equation*}
\int ^{\infty }_{0}\overset{n}{\dots} \int ^{\infty }_{0}\frac{e^{-\sum \limits^{n}_{i=1}x_{i}}}{\sum \limits^{n}_{i=1}x_{i}} \cos\left(\sum \limits^{n}_{i=1}x_{i}\right) \prod \limits^{n}_{i=1}dx_{i}=\frac{2^{\frac{1-n}{2} }}{n-1} \cos\left( \frac{\pi }{4} \left( n-1\right) \right)
\end{equation*}
\begin{equation*}
\int ^{\infty }_{0}\overset{n}{\dots} \int ^{\infty }_{0}\frac{e^{-\sum \limits^{n}_{i=1}x_{i}}}{\sum \limits^{n}_{i=1}x_{i}} \sin\left(\sum \limits^{n}_{i=1}x_{i}\right) \prod \limits^{n}_{i=1}dx_{i}=\frac{2^{\frac{1-n}{2} }}{n-1} \sin\left( \frac{\pi }{4} \left( n-1\right) \right)
\end{equation*}
If $k=2015$,
\begin{equation*}
\int ^{\infty }_{0}\overset{2015}{\dots} \int ^{\infty }_{0}\frac{e^{-\sum \limits^{2015}_{i=1}x_{i}}}{\sum \limits^{2015}_{i=1}x_{i}} \sin\left(\sum \limits^{2015}_{i=1}x_{i}\right) \prod \limits^{2015}_{i=1}dx_{i}=-\frac{2^{-1007}}{2014} 
\end{equation*}
\end{example}

\begin{example}(Theorem \ref{theor1})
Taking $k=n$, and choosing $a_n=1, b_n=1$ and $c_n=0$ values of the parameters, we see
\begin{equation*}
\int ^{1}_{0}\overset{n}{\dots}\int ^{1}_{0}\frac{-1}{\log\left( \prod \limits^{n}_{i=1}x_{i}\right) } \prod \limits^{n}_{i=1}dx_{i}= \int ^{\infty }_{0}\frac{1}{\left( x+1\right) ^{n}} dx= \frac{1}{n-1} \end{equation*}
\end{example}
\begin{example}$($Theorem \ref{theor2}$)$
If $j=k$ and $k=n$,
\begin{equation*}\int ^{1}_{0}\overset{n}{\dots}\int ^{1}_{0}\frac{-1}{\log^{k}\left( \prod \limits^{n}_{i=1}x_{i}\right) } \prod \limits^{n}_{i=1}dx_{i}=\frac{1}{\left( n-1\right) \left( n-2\right) \cdots \left( n-k\right) }. \end{equation*}
If $j=n-1$ and $k=n$,
\begin{equation}
\int ^{1}_{0}\overset{n}{\dots}\int ^{1}_{0}\frac{-1}{\log^{n-1}\left( \prod \limits^{n}_{i=1}x_{i}\right) } \prod \limits^{n}_{i=1}dx_{i}=\frac{1}{\left( n-1\right) !}. \end{equation}
If $n \to \infty$,
\begin{equation*}
\lim \limits_{n\to \infty }\int ^{1}_{0}\overset{n}{\dots}\int ^{1}_{0}\frac{-1}{\log^{k}\left( \prod \limits^{n}_{i=1}x_{i}\right) } \prod \limits^{n}_{i=1}dx_{i}=0 \qquad \forall k\geq 1.\end{equation*}
And if $j=2$ and $k=1500$,
\begin{equation*}
\int ^{1}_{0}\overset{1500}{\dots}\int ^{1}_{0}\frac{-1}{\log^{2}\left( \prod \limits^{1500}_{i=1}x_{i}\right) } \prod \limits^{1500}_{i=1}dx_{i}=\frac{1}{2245502}.\end{equation*}
\end{example}

\begin{example}$($Theorem \ref{theor99}$)$
If $m=1$, $a_{1}=a_{2}=1$, $b_{i}=0$, $c_{i}=1$, $j=n$ and $k=2$,
\begin{equation*}
\int _{0}^{\infty}\overset{n}{\dots}\int _{0}^{\infty}\frac{e^{-\sum x_{i}}}{\left(1+\sum x_{i}\right)^{2}}\prod \limits^{n}_{i=1}dx_{i}=\int_{0}^{\infty}\int_{0}^{\infty}\frac{e^{-x-y}}{(x+y+1)^n}dxdy=\frac{-1+enE_{n-1}(1)}{n-1}.
\end{equation*}
where $E_{n}(x)$ is the Exponential integral $E$.
\begin{equation*}
\end{equation*}
\end{example}

\begin{example}(Corollary \ref{theor3})
If $k=2$ and doing a variable change we can assure
\begin{align*}
\int ^{1}_{0}\int ^{1}_{0}\frac{1}{\left( 1-xy\right) \left( 1-\log\left( xy\right) \right) } dxdy &=\int ^{1}_{0}\frac{-\log\left( x\right)}{\left( 1-x\right) \left( 1-\log\left( x\right) \right) } dx \\ &=\sum \limits^{\infty }_{n=1}\left( \frac{1}{n} -e^{n}\int ^{\infty }_{n}\frac{e^{-x}}{x} dx\right) \\ &=\gamma+\sum \limits^{\infty }_{n=1}\left( \log\left(1+\frac{1}{n}\right) -e^{n}\int ^{\infty }_{n}\frac{e^{-x}}{x} dx\right),\end{align*}
\begin{align*}
\int ^{1}_{0}\frac{-\log\left( x\right) x^{\alpha -1}}{\left( 1-x^{\alpha }\right) \left( 1-\log\left( x\right) \right) } dx &=\sum \limits^{\infty }_{n=1}\left( \frac{1}{\alpha n} -e^{\alpha n}\int ^{\infty }_{\alpha n}\frac{e^{-x}}{x} dx\right) \\ &=\frac{1}{\alpha}\gamma+\sum \limits^{\infty }_{n=1}\left( \frac{1}{\alpha}\log\left(1+\frac{1}{n}\right) -e^{\alpha n}\int ^{\infty }_{\alpha n}\frac{e^{-x}}{x} dx\right).\end{align*}
with $\alpha \in \mathbb{R^{+}}$.
\end{example}

\begin{example}(Corollary \ref{cor1})
Letting $a=b=c=d=e=h=g=1$ in the first formula, we have
\begin{equation*}
\int ^{1}_{0}\int ^{1}_{0}\frac{-\sin\left( \log\left( xy\right) \right) }{\left( 1-xy\right) \log\left( xy\right) } dxdy= 
\sum \limits^{\infty }_{n=0}\frac{1}{\left( n+1\right) ^{2}+1} = \frac{1}{2} \left( \pi \coth\left( \pi \right) -1\right),
\end{equation*}
If we make the substitution $y=u/x$ and $x=x$ to the integral, we get
\begin{equation*}
\int ^{\infty }_{0}\frac{\sin\left( x\right) }{1-e^{x}} dx=\frac{1}{2} \left( -\pi \coth\left( \pi \right) +1\right). \end{equation*}
\end{example}

\begin{example}(Theorem \ref{theor4})
If $a=3$, $b=3$ and $c=1$:
\begin{equation*}
\int^{1}_{0} \frac{x^{2}-1}{\left( 1+x^{3}\right) \log\left( x\right) } dx=\log\left( \prod \limits^{\infty }_{n=0}\left( \frac{3n+3}{3n+1} \right)^{(-1)^{n}}\right) =\log\left( \frac{\Gamma \left( \frac{1}{6} \right) }{\sqrt{\pi } \Gamma \left( \frac{2}{3} \right) } \right).
\end{equation*}
\end{example}

\begin{example} $($Theorem \ref{theor5}$)$
If we change $a=-1$ and $b=1$ in the 2nd formula we prove one of the Dirichlet's formulas
\begin{equation*}
\int ^{\infty }_{0}\frac{\sin\left( x\right) }{x} dx=\frac{\pi }{2}. 
\end{equation*}
\end{example}

\begin{example}(Theorem \ref{theor6})
If $s=3$
\begin{equation*}
\zeta \left( 3\right) =\frac{-3}{2}\int ^{1}_{0}\int ^{1}_{0}\frac{\log^{2}y}{\left( 1-xy\right) \log\left( xy\right) } dxdy.
\end{equation*}
\end{example}

\begin{example}(Theorem \ref{theor6})
We know $\Phi \left( -1,2,1/2\right) =4G$, so if $z=-1$, $s=2$ and $a=1/2$ 
\begin{equation*}G=\frac{1}{2} \int ^{1}_{0}\int ^{1}_{0}\int ^{1}_{0}\frac{-1}{\sqrt{xyz} \left( 1+xyz\right) \log\left( xyz\right) } dxdydz=\int ^{1}_{0}\int ^{1}_{0}\frac{ \log\left( y\right) }{2\sqrt{xy} \left( 1+xy\right) \log\left( xy\right) } dxdy.\end{equation*}
where $G$ is the Catalan's constant.
\end{example}

\begin{example}(Theorem \ref{theor6})
If $z=-1$,$s=1$ and $a=1/4$
\begin{equation*}
\int ^{1}_{0}\int ^{1}_{0}\frac{-1}{\sqrt[4]{\left( xy\right) ^{3}} \left( 1+xy\right) \log\left( xy\right) } dxdy=\frac{\pi +2\coth^{-1}\left( \sqrt{2} \right) }{\sqrt{2} } \end{equation*}
\end{example}

\begin{example}(Theorem \ref{theor7})
If $z=1$, $s=7$ and $a=1$,
\begin{equation*}
\zeta \left( 7\right) =\frac{-7}{3600} \int ^{1}_{0}\int ^{1}_{0}\frac{\log\left( x\right) \log\left( y\right) }{\left( 1-x y\right) \log\left( xy\right) } \left[ 12 \log^4(y)+30\log(x)\log^3(y) +20\log^2(x)\log^2(y)\right]
\end{equation*}
\end{example}

\begin{example}(Theorem \ref{theor8})
If $a=2, b=2, c=1$ and $d=0$, 
\begin{equation*}
\int ^{1}_{0}\int ^{1}_{0}\frac{(x y)^{2}\left[ \left( xy\right) ^{-1}-x\right] }{-\left[ 1-(xy)^{2}\right] \log (xy)}dxdy = -\log\left(\frac{ \sqrt{\pi}}{2}\right)+\frac{\gamma}{2},
\end{equation*}
If $a=4, b=5, c=8$ and $d=10$, 
\begin{equation*}
\int ^{1}_{0}\int ^{1}_{0}\frac{(xy)^{5}\left[ \left( xy\right) ^{9}-x^{3}y^{7}\right] }{-\left[ 1-(xy)^{5}\right] \log (xy)}dxdy = \frac{1}{4}\log\left( \frac{\Gamma \left(\frac{13}{5}\right)}{\Gamma \left(\frac{9}{5}\right)}\right)+\frac{1}{5}\gamma-\frac{3}{10}.
\end{equation*}
\end{example}

\begin{example}(Teorema \ref{theor9})
Thanks to the double integral of Digamma function we can get
\begin{equation*}
\sum \limits^{\infty }_{n=1}\frac{\psi \left( n\right) }{n\left( n+1\right) } =1-\gamma \end{equation*}
\begin{equation*}
\sum \limits^{\infty }_{n=1}\frac{\psi \left( \frac{n}{2} \right) }{n\left( n+1\right) } =2-\gamma -\frac{\pi ^{2}}{6} -\log^{2}\left( 2\right) \end{equation*}
\begin{equation*}
\sum \limits^{\infty }_{n=1}\frac{\psi \left( \frac{n}{4} \right) }{n\left( n+1\right) } =4-2G-\gamma -\frac{19\pi ^{2}}{48} +\frac{\pi }{4} \log\left( 2\right) -\frac{5}{4} \log^{2}\left( 2\right) \end{equation*}
\begin{equation*}
\sum \limits^{\infty }_{n=1}\left( -1\right) ^{n}\frac{\psi \left( n\right) }{n} =\frac{1}{2} \log\left( 2\right) \left( 2\gamma +\log\left( 2\right) \right) \end{equation*}
\begin{equation*}
\sum \limits^{\infty }_{n=1}\left( -1\right) ^{n}\frac{\psi \left( \frac{n}{2} \right) }{n} =\frac{\pi ^{2}}{12} +\log\left( 2\right) \left( \gamma +\log\left( 2\right) \right) \end{equation*}
\begin{equation*}
\sum \limits^{\infty }_{n=1}\left( -1\right) ^{n}\frac{\psi \left( \frac{n}{4} \right) }{n} =\frac{11\pi ^{2}}{48} +\gamma \log\left( 2\right) +\frac{7}{4} \log^{2}\left( 2\right) \end{equation*}
\begin{equation*}
\sum \limits^{\infty }_{n=1}\frac{\psi \left( n\right) }{n^{2}} =-\frac{\pi ^{2}}{6} \gamma +\zeta \left( 3\right) \end{equation*}
\begin{equation*}
\sum \limits^{\infty }_{n=1}\frac{\psi \left( n\right) }{n\left( n+2\right) } =\frac{7}{8} -\frac{3}{4} \gamma \end{equation*}
\begin{equation*}
\sum \limits^{\infty }_{n=1}\frac{\psi \left( \frac{n}{2} \right) }{n\left( n+2\right) } =\frac{3}{4} -\frac{3}{4} \gamma -\log\left( 2\right) \end{equation*}
\begin{equation*}
\sum \limits^{\infty }_{n=1}\frac{\psi \left( \frac{n}{4} \right) }{n\left( n+2\right) } =\frac{1}{48} \left( -36\gamma +\left( 12-11\pi \right) \pi -12\left( -6+\log^{2}\left( 2\right) +6\log\left( 2\right) \right) \right) .\end{equation*}
where $G$ is the Catalan's constant.
\end{example}

\section*{Acknowledgment}
Special thanks to Jes\'us Guillera and Jonathan Sondow for their help in the editing of the article. The first people who rated my work. I also acknowledge to the anonymous referee whose comments have been very helpful in improving the presentation.



\begin{thebibliography}{99}



\bibitem[S2]{S2}
\textsc{Guillera, Jes\'us and Sondow, Jonathan}.\\
Double integrals and infinite products for some classical constants via analytic continuations of Lerch's transcendent. \\ \emph{Ramanujan J. 16 (2008) 247-270.} 

\bibitem[R]{R}
\textsc{Ramanujan, Srinivasa}. \\
Some definite integrals. \\
\emph{Mess. Math. 44 (1915), pp. 10-18}

\bibitem[S1]{S1}
\textsc{Sondow, Jonathan}.\\
Double integrals for Euler's constant and ln 4/pi and an analog of Hadjicostas's formula. \\
\emph{Amer. Math. Monthly 112 (2005) 61-65.}

\bibitem[S]{S} 
\textsc{Sondow, Jonathan and Weisstein, Eric W.} \\
Riemann Zeta Function.
\emph{MathWorld - A Wolfram Web Resource.} \\
{ http://mathworld.wolfram.com/RiemannZetaFunction.html}

\bibitem[W]{W}
\textsc {Weisstein, Eric W.} \\
Unit Square Integral. 
\emph{MathWorld--A Wolfram Web Resource.} \\
{http://mathworld.wolfram.com/UnitSquareIntegral.html}

\end{thebibliography}
\end{document}